\documentclass[a4paper,american,reqno]{amsart}

\usepackage{babel}
\usepackage[utf8]{inputenc}
\usepackage[T1]{fontenc}
\usepackage{xspace}
\usepackage{accents}
\usepackage{csquotes}
\usepackage{amssymb}
\usepackage{todonotes}
\usepackage{microtype}
\usepackage[style = authoryear,
            maxbibnames = 100,
            maxcitenames = 2,
            giveninits = true,
            isbn = false,
            url = true,
            backend = bibtex]{biblatex}
\usepackage[colorlinks,
            citecolor=blue,
            urlcolor=blue,
            linkcolor=blue]{hyperref} 
\usepackage{soul}
\vfuzz \hfuzz
\makeatletter
\patchcmd{\@settitle}{\uppercasenonmath\@title}{\scshape\large}{}{}
\patchcmd{\@setauthors}{\MakeUppercase}{\scshape\normalsize}{}{}
\makeatother

\newtheorem{theorem}{Theorem}[section]

\newtheorem{lemma}[theorem]{Lemma}
\newtheorem{proposition}[theorem]{Proposition}
\newtheorem{definition}[theorem]{Definition}
\theoremstyle{definition}
\newtheorem{remark}[theorem]{Remark}

\newtheorem{example}[theorem]{Example}

\providecommand{\U}[1]{\protect\rule{.1in}{.1in}}
\providecommand{\dom}{\mathop{\rm dom}\nolimits}

\providecommand{\argmax}{\mathop{\rm argmax}\nolimits}

\providecommand{\tto}{\mathop{\rightrightarrows}\nolimits}

\newcommand{\GNEP}{\text{GNEP}}

\newcommand{\R}{\mathbb{R}}

\newcommand{\N}{\mathbb{N}}
\newcommand{\Q}{\mathbb{Q}}

\newcommand{\define}{\mathrel{{\mathop:}{=}}}

\newcommand{\defset}[3][\defsep]{\set{#2#1#3}}
\newcommand{\Defset}[3][\defsep]{\Set{#2#1#3}}
\newcommand{\set}[1]{\{#1\}}
\newcommand{\Set}[1]{\left\{#1\right\}}

\bibliography{Refs}

\title[{Are SLMF Problems Particular Cases of Bilevel
  Optimization?}]{Are Single-Leader-Multi-Follower Problems\\Particular
  Cases of Bilevel Optimization?}

\author[D. Aussel, D. Salas, M. Schmidt]%
{Didier Aussel, David Salas, Martin Schmidt}

\address[D. Aussel]{Lab. PROMES UPR CNRS 8521, University of Perpignan via Domitia, Tecnodud, Perpignan 66100, France}
\email{aussel@univ-perp.fr}

\address[D. Salas]{Instituto de Ciencias de la Ingeniería,
  Universidad O'Higgins, Rancagua, Chile}
\email{david.salas@uoh.cl}

\address[M. Schmidt]{%
  Trier University,
  Department of Mathematics,
  Universitätsring 15,
  54296 Trier,
  Germany}
\email{martin.schmidt@uni-trier.de}

\date{\today}

\begin{document}

\begin{abstract}
  For more than 20 years, single-leader multi-follower (SLMF) problems
  have been treated as a more difficult class of
  problems in comparison to bilevel optimization. In this work we
  challenge this common belief, studying whether SLMF problems can be
  reformulated as equivalent bilevel optimization problems. We consider both
  optimistic and pessimistic formulations. For the optimistic case the
  answer is positive: We derive an equivalent bilevel problem by
  representing the followers' equilibria as fixed points of the
  solution mapping of a single optimization problem. For pessimistic
  problems, the situation is different. We show that every optimistic
  trilevel problem admits an equivalent pessimistic SLMF formulation
  with two followers. Conversely, for every pessimistic SLMF problem
  verifying a mild attainability assumption we establish an equivalent
  optimistic trilevel reformulation. The reformulations are easy,
  readily tractable, and they preserve convex and linear data. Thus,
  unless $\text{NP}=\Sigma_2^p$, we deduce as byproduct that multiple
  followers induce an unexpected complexity gap between optimistic and
  pessimistic approaches, which is not the case when considering one
  follower alone.
\end{abstract}

\keywords{Bilevel optimization, Single-leader multi-follower games,
  Complexity theory, Optimistic vs.\ pessimistic}
\subjclass[2020]{%
  90Cxx, 
  91Axx, 
  68Q15, 
  90C33
}

\maketitle

\section{Introduction}\label{sec:intro}

Bilevel optimization deals with optimization problems in which some
variables are required to solve another optimization problem. These
problems model hierarchical interactions between two decision makers,
the leader and the follower. The leader makes a decision first and
anticipates the optimal response of the follower, whose problem depends
on the leader's decision. If several followers interact non-cooperatively
at the lower level, one obtains a single-leader multi-follower (SLMF)
problem. Each follower then solves an optimization problem that also
depends on the strategies of the other followers. The resulting
lower-level problem is a Nash equilibrium problem or, if the followers'
feasible sets depend on the other followers' strategies, a generalized
Nash equilibrium problem (GNEP). Hence, SLMF problems extend the usual
single-leader single-follower (SLSF) setting of bilevel optimization.

While simply quoted by \textcite{vstack} in his pioneering work, such combinations of hierarchical optimization and equilibrium problems have been studied in the seminal work of
\textcite{PangFukushima:2005:QVI}, and can be traced back to
\textcite{Sherali-Syster-Murphy:1983}. For an overview of single- and
multi-leader-follower games, including existence results, reformulations,
and solution methods, we refer to \textcite{AusselSvensson2020:Short}.
A tutorial on solving SLMF problems using SOS1 reformulations is given in
\textcite{AusselEtAl:2025:Tutorial}. A central issue of these models is
that the followers' equilibrium need not be unique. In the optimistic
case, the leader selects the most favorable equilibrium among those
satisfying the upper-level constraints. In the pessimistic case, the
leader accounts for the worst equilibrium outcome and requires the
coupling constraints to hold for every equilibrium. This distinction
already arises in bilevel optimization
\parencite{Dempe:2002:Foundations,BeckLjubicSchmidt:2027:Linear}
and is equally relevant for SLMF problems.

SLMF problems appear more difficult than their SLSF counterparts because
the leader must anticipate a system of mutually dependent optimization
problems. In general, minimizing the sum of the followers' objectives
over their jointly feasible strategies does not characterize the
equilibria of the game. Moreover, convexity of each follower's problem
does not ensure convexity of the equilibrium set, and even uniqueness of
each individual best response does not ensure uniqueness of the
equilibrium. These properties make the analysis and solution of SLMF
problems challenging. However, they do not settle whether SLMF problems
are actually harder than bilevel problems. In particular, the question
remains whether an SLMF problem can be reformulated as a bilevel problem
while preserving properties such as convexity or linearity of the data.

In recent years, several papers have shown that problem classes that
are usually studied separately can be related by suitable
reformulations. \textcite{Zeng:2020:APractical} develops a scheme for
pessimistic bilevel optimization based on a tight optimistic bilevel
relaxation and a procedure for recovering a feasible optimal solution.
This allows one to apply methods for optimistic bilevel optimization
to pessimistic problems. For solvable bilevel problems with rational
linear data, \textcite{HenkeEtAl:2025:OnCoupling} show that coupling
constraints can be removed while preserving linearity and the globally
optimal solutions after projection onto the original variables.
\textcite{HenkeEtAl:2026:CouplingPessimistic} prove corresponding results
for pessimistic linear bilevel problems and also derive optimistic
reformulations without coupling constraints. Finally,
\textcite{LefebvreEtAl2025:One-Extra} show that, under suitable
assumptions, GNEPs can be reformulated as Nash equilibrium problems by
adding one player. These reformulations preserve linear or convex
structure for the respective problem classes. The common idea of these
works is to use additional variables or players to obtain a formulation
that allows one to apply theory and methods from another problem class.

In this paper, we study whether SLMF problems with a lower-level GNEP
can be reformulated as bilevel optimization problems. We first consider
the optimistic case. Inspired by the use of copied follower variables in
\textcite{Zeng:2020:APractical}, we represent the followers' equilibria as
fixed points of the solution mapping of a single optimization problem.
We then introduce a copy of the followers' strategy vector at the upper
level and link both copies by equality constraints. This yields an
equivalent optimistic bilevel problem without any assumption on the
structure of the problem.
We show that the reformulation preserves convex and linear data.
For explicitly encoded rational linear data, the
reformulation can be constructed in polynomial time and has polynomial
encoding length.

For pessimistic SLMF problems, we derive an equivalent optimistic
trilevel optimization problem under an attainability assumption on
truncated constraint-violation functions. The truncation allows us to
handle unbounded constraint violations, and we show that the
attainability assumption always holds under linear data. We also prove
the reverse result: every optimistic trilevel problem can be
reformulated as a pessimistic SLMF problem. To this end, we encode the
feasible region of the middle- and lower-level problems through a
two-player GNEP and impose optimality by robust comparisons with its
equilibria. Both reformulations preserve linear data and can be
constructed in polynomial time with polynomial encoding length for
rational linear data. We also discuss the assumptions and modifications
needed to preserve convex data.

Finally, we analyze the complexity of SLMF problems with rational linear
data. Using the results of
\textcite{Jeroslow1985:Polynomial,BUchheim2023:BilevelinNP,Sugishita-Carvalho:2026},
we prove that the decision problem for optimistic SLMF problems with
rational linear data is NP-complete, whereas its pessimistic counterpart
is $\Sigma_2^p$-complete. The latter result continues to hold without
coupling constraints, already for a two-follower game. We prove this by
reducing the DeNegre bilevel knapsack problem, whose complexity was
established by \textcite{Caprara-et-al:2014:Knapsack}. We further give a
linear example showing that direct exact penalization of the robust
coupling constraints can fail. Thus, unless $\mathrm{NP}=\Sigma_2^p$,
there is no polynomial-time reduction of the general pessimistic SLMF
decision problem with rational linear data to the corresponding
optimistic linear bilevel decision problem. The optimistic and
pessimistic SLMF formulations therefore have different relationships
with bilevel optimization even when all data are linear.
This contrast was unexpected because our starting point was the scheme
of \textcite{Zeng:2020:APractical} for solving pessimistic bilevel
problems through optimistic bilevel optimization.

The remainder of the paper is organized as follows.
Section~\ref{sec:SLMF} introduces the followers' equilibrium problem,
the optimistic and pessimistic SLMF formulations, and the notions of
convex and linear data used throughout the paper.
Section~\ref{sec:optimistic} develops the bilevel reformulation of
optimistic SLMF problems.
Section~\ref{sec:pessimistic-to-three-level} establishes the
reformulations between pessimistic SLMF problems and optimistic trilevel
problems.
Section~\ref{sec:complexity-under-linear-data} derives the complexity
results under rational linear data and studies the case without coupling
constraints.
Section~\ref{sec:conclusions} concludes the paper.

\section{Single-Leader-Multi-Follower Problems}\label{sec:SLMF}

For a natural number $m\in\N$, we write $[m]\define\{1,\ldots,m\}$.
We consider one leader deciding the variable~$x\in X\subseteq\R^p$
and a set $I \define [N]$ of followers.
Each follower $i\in I$ decides a variable $y_i\in Y_i\subseteq\R^{q_i}$.
We write
\begin{equation*}
  Y\define\prod_{i\in I}Y_i,
  \quad
  Y_{-i}\define\prod_{j\in I\setminus\{i\}}Y_j,
  \quad
  q\define\sum_{i\in I}q_i.
\end{equation*}
For $y=(y_i)_{i\in I}\in Y \subseteq \R^q$, the vector containing the
decisions of all followers other than follower $i$ is denoted by
$y_{-i}$.

\subsection{The Followers' Equilibrium Problem}
\label{subsec:followers}

For each $x\in X$ and $y_{-i}\in Y_{-i}$, follower $i$ aims to
minimize the objective function~$f_i(x,y_i,y_{-i})$ subject to the
constraint
\begin{equation*}
  y_i\in K_i(x,y_{-i})\subseteq Y_i.
\end{equation*}
Thus, the followers solve the $x$-parametric GNEP given by the family
of problems
\begin{equation}
  \label{eq:SLMF-lower-problem}
  \min_{y_i}\bigl\{f_i(x,y_i,y_{-i}):y_i\in K_i(x,y_{-i})\bigr\},
  \quad i\in I.
\end{equation}
For each $x\in X$, we denote by $\GNEP(x)$ the set of equilibria
of~\eqref{eq:SLMF-lower-problem}. Equivalently, $y\in \GNEP(x)$ if and
only if, for every $i\in I$,
\begin{equation*}
  y_i\in K_i(x,y_{-i})
  \quad\text{and}\quad
  f_i(x,y_i,y_{-i})\leq f_i(x,z_i,y_{-i})
  \quad\forall z_i\in K_i(x,y_{-i}).
\end{equation*}
We also define
\begin{equation*}
  D\define\dom(\GNEP)=\{x\in X : \GNEP(x)\neq\emptyset\}.
\end{equation*}
The equilibrium problem~\eqref{eq:SLMF-lower-problem} is called the
followers' problem.

At the end of this work, we will pay particular attention to games that
are fully linear; in particular, they do not involve products of
decision variables across players. These games have the players'
interaction only in the constraints.
\begin{definition}[Fully linear games]\label{def:FullyLinear}
  We say that the equilibrium problem \eqref{eq:SLMF-lower-problem} is
  \emph{fully linear} if for every $i\in I$ we have that
  \begin{itemize}\setlength{\itemsep}{0.3cm}
  \item[(i)] $f_i(x,y_i,y_{-i}) = c_i^{\top}y_i$ for some vector $c_i\in\R^{q_i}$; and
  \item[(ii)] $K_i(x,y_{-i}) = \{y_i \in \R^{q_i} : A_ix + B_iy_i +
    C_{i}y_{-i}\leq b_i\} \subseteq Y_i$, for some matrices
    $A_i,B_i,C_i$ and a vector~$b_i$ of appropriate dimensions.
  \end{itemize}
\end{definition}

\subsection{Optimistic and Pessimistic Formulations}
\label{subsec:formulations}

In the optimistic formulation, the leader aims to minimize, in terms
of both $x$ and $y$ variables, an
objective function $F(x,y)$ while satisfying the coupling constraints
$G(x,y)\leq0$, where $G=(G_1,\ldots,G_m):X\times Y\to\R^m$ with $m
\geq 1$. The leader can select any equilibrium $y\in \GNEP(x)$ to
evaluate $F(x,\cdot)$ and $G(x,\cdot)$. The optimistic SLMF problem is
\begin{subequations}\label{eq:Optimistic-SLMF-problem}
  \begin{align}
    \min_{x,y}\quad &F(x,y)\label{eq:Optimistic-SLMF-objective}\\
    \text{s.t.}\quad &x\in X,\label{eq:Optimistic-SLMF-X}\\
                    &G(x,y)\leq0,\label{eq:Optimistic-SLMF-G}\\
                    &y\in \GNEP(x).\label{eq:Optimistic-SLMF-GNEP}
  \end{align}
\end{subequations}

In the pessimistic formulation, the equilibrium is selected
adversarially. The leader must therefore account for the worst value
of their objective function and must satisfy the coupling constraints
for every followers' equilibrium. Using direct epigraphical
reformulations as in \textcite{Wiesemann:2013:Pessimistic}, we can assume
that the objective function of the leader is a function
$\theta:X\to\R$ that does not depend on the followers
equilibrium. Consequently, throughout the rest of the paper we
consider the general pessimistic SLMF problem
\begin{subequations}
  \label{eq:Pessimistic-SLMF-problem}
  \begin{align}
    \min_x\quad
    &\theta(x)\label{eq:Pessimistic-SLMF-objective}\\
    \text{s.t.}\quad
    &x\in X\cap D,\label{eq:Pessimistic-SLMF-domain}\\
    &G(x,y) \leq 0,
      \quad \forall y\in \GNEP(x),\label{eq:Pessimistic-SLMF-robust}
  \end{align}
\end{subequations}

Note that in the pessimistic formulation, we need to make explicit the
constraint $x\in D =\dom(\GNEP)$. Otherwise, the leader might be
encouraged to choose a point $x\in X$ with $\GNEP(x) = \emptyset$ so
that the robust constraint~\eqref{eq:Pessimistic-SLMF-robust} is satisfied
by vacuity. This is not desirable in terms of modeling and so we
enforce the leader to be restricted only to those decisions for
which the followers' problem admits solutions. This constraint appears
implicitly in the optimistic formulation.

\subsection{Convexity and Linearity}
\label{subsec:convexity}

We say that the followers' problem is \emph{convex} if, for every
$i\in I$ and every $(x,y_{-i})\in X\times Y_{-i}$, the set $Y_i$ is convex, the function
\begin{equation*}
  f_i(x,\cdot,y_{-i}):Y_i\to\R
\end{equation*}
is convex, and the set $K_i(x,y_{-i})$ is convex.

We say that the optimistic SLMF
problem~\eqref{eq:Optimistic-SLMF-problem}  \emph{has convex data} if
$X$ is convex, $F$ and $G$ are jointly convex, and, for every $i\in
I$, the respective follower's problem is convex.

We say that the pessimistic SLMF
problem~\eqref{eq:Pessimistic-SLMF-problem}  \emph{has convex data} if
$X$ is convex, $\theta$ is convex, for every $i\in I$,
the respective follower's problem is convex, and the functions
$G_k(x,\cdot)$, $k\in [m]$, are concave, for every $x\in X\cap D$.
The concavity of the
robust coupling constraints is required due to the fact that there is an
implicit maximization problem given by $\sup_y\{G_k(x,y)\mid y\in
\GNEP(x)\}$. In this maximization problem, concavity of
$G_k(x,\cdot)$ is a desirable property; see e.g.,
\textcite{Wiesemann:2013:Pessimistic}.

Since an optimistic or pessimistic bilevel problem is the respective
SLMF game with just one follower, the definition of a bilevel problem
having convex data is included in the above definitions.

Now, if we are interested in linear formulations, we need to define
linear data. We say that the optimistic SLMF
problem~\eqref{eq:Optimistic-SLMF-problem} has linear data if
\begin{enumerate}
\item[(i)] The leader's objective function is of the form $F(x,y) =
  d^{\top}x + e^{\top}y$.
\item[(ii)] $X$ and $Y$ are polyhedra, i.e., sets
  defined by finitely many linear constraints; see,
  e.g., \textcite[Chapter 3.5]{Conforti-et-al:2014}.
\item[(iii)] The coupling constraint function is of the form $G(x,y) =
  Tx + Wy - h$ for some matrices $T,W$ and a vector $h$ of appropriate
  dimensions.
\item[(iv)] The lower-level GNEP is fully linear in the sense of
  Definition~\ref{def:FullyLinear}.
\end{enumerate}

Similarly, we say that the pessimistic SLMF
problem~\eqref{eq:Pessimistic-SLMF-problem} has linear data if the
leader's objective function is of the form $\theta(x) = d^{\top}x$
and if (ii)--(iv) hold as in the optimistic case.

\begin{remark}
  In the sequel, we will refer also to linear data for multilevel
  problems (trilevel, in particular).
  The notion extends in the same way as defined for SLMF problems, and it
  is already a known concept in multilevel programming; see, e.g.,
  \textcite{Jeroslow1985:Polynomial}.
\end{remark}

Of course, even with convex or linear data, both optimistic and pessimistic SLMF
problems may be nonconvex. Indeed, even with convex data, the sets
$\GNEP(x)$ are not necessarily convex, and the domain $D=\dom(\GNEP)$ might not
be convex either. Furthermore, even if $G$ is linear and $\GNEP(x)$ is
convex-valued, the equilibrium constraint \eqref{eq:Optimistic-SLMF-GNEP} and the robust
constraint \eqref{eq:Pessimistic-SLMF-robust} do not necessarily
define convex feasible sets.
The importance of convex data, at least in bilevel
optimization, is that it paves the way to several approaches to
either reformulate problems as single-level ones or to use different
algorithmic techniques to search for local or even global optima.

\section{Optimistic SLMF Problems as Bilevel Problems}
\label{sec:optimistic}

For $(x,\hat y)\in X\times Y$, let $S(x,\hat y)$ be the solution set
of the optimization problem
\begin{subequations}\label{eq:BP-lower-problem}
  \begin{align}
    \min_y\quad &\sum_{i\in I}f_i(x,y_i,\hat y_{-i})\label{eq:BP-lower-objective}\\
    \text{s.t.}\quad
                &y_i\in K_i(x,\hat y_{-i}),
                &&\forall i\in I,\label{eq:BP-lower-feasibility}\\
                &f_i(x,y_i,\hat y_{-i})
                  \leq f_i(x,\hat y_i,\hat y_{-i})
                &&\forall i\in I.\label{eq:BP-lower-improvement}
  \end{align}
\end{subequations}

\begin{lemma}\label{lem:fixed-point-GNEP}
  For every $x\in X$ and $y\in Y$,
  \begin{equation*}
    y\in \GNEP(x)
    \iff
    y\in S(x,y).
  \end{equation*}
  Equivalently, $\GNEP(x)$ is the set of fixed points of the
  set-valued map $S(x,\cdot):Y\tto Y$.
\end{lemma}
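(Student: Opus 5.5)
The plan is to prove the two implications directly from the definitions of $\GNEP(x)$ and of $S(x,\hat y)$ as the solution set of~\eqref{eq:BP-lower-problem}. The key observation is that problem~\eqref{eq:BP-lower-problem} with parameter $\hat y$ is \emph{separable} across followers. Its feasible set is the product over $i\in I$ of the sets
\begin{equation*}
  L_i(x,\hat y) \define \{y_i\in K_i(x,\hat y_{-i}) : f_i(x,y_i,\hat y_{-i})\le f_i(x,\hat y_i,\hat y_{-i})\},
\end{equation*}
and its objective is the sum of the terms $f_i(x,y_i,\hat y_{-i})$, each depending only on $y_i$.

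\emph{Forward direction.} Let $y\in\GNEP(x)$ and put $\hat y=y$. Then $y$ is feasible for~\eqref{eq:BP-lower-problem}: $y_i\in K_i(x,y_{-i})$ by the equilibrium property, and~\eqref{eq:BP-lower-improvement} holds with equality. For any feasible $z$, each $z_i\in K_i(x,y_{-i})$. The equilibrium inequality gives $f_i(x,y_i,y_{-i})\le f_i(x,z_i,y_{-i})$ for each $i$, and summing over $i$ shows that $y$ attains the minimal objective value. Hence $y\in S(x,y)$.

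\emph{Reverse direction.} Let $y\in S(x,y)$. Feasibility immediately gives $y_i\in K_i(x,y_{-i})$ for all $i$. Fix $i$ and any $z_i\in K_i(x,y_{-i})$, and suppose for contradiction that $f_i(x,z_i,y_{-i})<f_i(x,y_i,y_{-i})$. Build $z$ by replacing the $i$-th block of $y$ with $z_i$ and keeping $z_j=y_j$ for $j\ne i$. Then $z$ is feasible for~\eqref{eq:BP-lower-problem} with $\hat y=y$. Indeed, constraint~\eqref{eq:BP-lower-feasibility} holds for every block, the improvement constraint for block $i$ holds strictly, and for $j\neq i$ it holds with equality; here one uses that the $j$-th term depends on $z_j$ and the fixed $\hat y_{-j}=y_{-j}$, not on $z_{-j}$. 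The objective of $z$ is strictly smaller than that of $y$, contradicting $y\in S(x,y)$. Thus $f_i(x,y_i,y_{-i})\le f_i(x,z_i,y_{-i})$ for all $z_i\in K_i(x,y_{-i})$ and all $i$, i.e., $y\in\GNEP(x)$.

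The only delicate point is this unilateral-deviation step. One must check that in~\eqref{eq:BP-lower-problem} the other players' variables enter only through the frozen parameter $\hat y$, so that changing one block leaves the other blocks' constraints and objective terms unchanged. Once this is checked, no cancellation in the sum can hide a profitable deviation, and no structural assumptions (convexity, attainment) are needed. The fixed-point reformulation is then just a restatement of the equivalence.
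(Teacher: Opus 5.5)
Your proof is correct and follows essentially the same route as the paper. The reverse direction is the same unilateral-deviation argument with $(z_i,y_{-i})$. The forward direction differs only cosmetically: you sum the equilibrium inequalities over $i$, whereas the paper combines them with the improvement constraints~\eqref{eq:BP-lower-improvement} to observe that each $f_i(x,\cdot,y_{-i})$ is constant on the feasible set.
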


\begin{proof}
  Suppose first that $y\in \GNEP(x)$.
  Then, for every $i\in I$, one has \mbox{$y_i\in K_i(x,y_{-i})$} and
  \begin{equation*}
    f_i(x,y_i,y_{-i})\leq f_i(x,z_i,y_{-i})
    \quad
    \forall z_i\in K_i(x,y_{-i}).
  \end{equation*}
  Consequently, every feasible point $z$ of~\eqref{eq:BP-lower-problem}
  with $\hat y=y$ verifies $f_i(x,z_i,y_{-i})=f_i(x,y_i,y_{-i})$ for
  every $i\in I$. Therefore $f_i(x,\cdot,y_{-i})$ is constant in the
  feasible set.
  In particular, $y$ is feasible for~\eqref{eq:BP-lower-problem} and
  minimizes its objective function.
  Hence, $y\in S(x,y)$.

  Conversely, suppose that $y\in S(x,y)$ and assume, by contradiction,
  that \mbox{$y\notin \GNEP(x)$}. Since $y$ is feasible
  for~\eqref{eq:BP-lower-problem}, one has $y_i\in K_i(x,y_{-i})$, for
  every $i\in I$. Thus, for $y$ to fail being an equilibrium, there
  must exist $i\in I$ and $z_i\in K_i(x,y_{-i})$ such that
  \begin{equation*}
    f_i(x,z_i,y_{-i})<f_i(x,y_i,y_{-i}).
  \end{equation*}
  Noting that the vector $(z_i,y_{-i})$ is feasible
  for~\eqref{eq:BP-lower-problem} with $\hat y=y$, we can write
  \begin{equation*}
    f_i(x,z_i,y_{-i})
    +\sum_{j\in I\setminus\{i\}}f_j(x,y_j,y_{-j})
    <\sum_{j\in I}f_j(x,y_j,y_{-j}).
  \end{equation*}
  This contradicts $y\in S(x,y)$, finishing the proof.
\end{proof}

The fixed-point condition in Lemma~\ref{lem:fixed-point-GNEP} can be
lifted by introducing a copy $\hat y$ of the followers' strategy
vector, in a very similar vein to the approach of
\textcite{Zeng:2020:APractical}.

\begin{theorem}
  \label{thm:Reduction-SLMF-BP}
  The optimistic SLMF problem~\eqref{eq:Optimistic-SLMF-problem}
  admits the equivalent single-leader single-follower
  reformulation
  \begin{subequations}\label{eq:BP-upper-problem}
    \begin{align}
      \min_{x,\hat y,y}\quad &F(x,y)\label{eq:BP-upper-objective}\\
      \text{s.t.}\quad &x\in X,\label{eq:BP-upper-X}\\
                             &\hat y\in Y,\label{eq:BP-upper-Y}\\
                             &G(x,y)\leq0,\label{eq:BP-upper-G}\\
                             &\hat y=y,\label{eq:BP-upper-coupling}\\
                             &y\in S(x,\hat y),\label{eq:BP-upper-lower}
    \end{align}
  \end{subequations}
  where $S(x,\hat y)$ is the solution set
  of~\eqref{eq:BP-lower-problem}. More precisely,
  \begin{equation*}
    (x,y)\text{ solves~\eqref{eq:Optimistic-SLMF-problem}}
    \iff
    (x,y,y)\text{ solves~\eqref{eq:BP-upper-problem}}.
  \end{equation*}
\end{theorem}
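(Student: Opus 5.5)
The plan is to show that the two problems have the same feasible set, up to the identification $(x,y)\leftrightarrow(x,y,y)$, and the same objective values on it. Equivalence of solutions then follows at once. The whole argument rests on Lemma~\ref{lem:fixed-point-GNEP}; no structural assumptions are needed.

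\emph{Step 1 (feasible sets correspond).} Let $(x,y)$ be feasible for~\eqref{eq:Optimistic-SLMF-problem}, so $x\in X$, $G(x,y)\le 0$ and $y\in\GNEP(x)$. Since $\GNEP(x)\subseteq Y$, we get $\hat y\define y\in Y$. Lemma~\ref{lem:fixed-point-GNEP} gives $y\in S(x,y)=S(x,\hat y)$, so $(x,y,y)$ satisfies~\eqref{eq:BP-upper-X}--\eqref{eq:BP-upper-lower}.

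Conversely, let $(x,\hat y,y)$ be feasible for~\eqref{eq:BP-upper-problem}. The coupling constraint~\eqref{eq:BP-upper-coupling} forces $\hat y=y$, so $y\in Y$ and $y\in S(x,y)$. Lemma~\ref{lem:fixed-point-GNEP} then yields $y\in\GNEP(x)$. Together with $x\in X$ and $G(x,y)\le0$, this makes $(x,y)$ feasible for~\eqref{eq:Optimistic-SLMF-problem}. Hence every feasible point of~\eqref{eq:BP-upper-problem} has the form $(x,y,y)$, and the projection $(x,\hat y,y)\mapsto(x,y)$ is a bijection between the two feasible sets.

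\emph{Step 2 (objectives agree and optimality transfers).} The objective of~\eqref{eq:BP-upper-problem} is $F(x,y)$, which does not involve $\hat y$. Corresponding points therefore have equal objective values, and both infima coincide. If $(x,y)$ solves~\eqref{eq:Optimistic-SLMF-problem}, then $(x,y,y)$ is feasible, and any feasible competitor $(x',y',y')$ satisfies $F(x',y')\ge F(x,y)$, because $(x',y')$ is feasible for the original problem. The reverse direction is symmetric.

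The only point needing care is the use of Lemma~\ref{lem:fixed-point-GNEP}, which requires $y\in Y$. In the forward direction this is guaranteed because equilibria lie in $Y$, as $K_i(x,y_{-i})\subseteq Y_i$. In the reverse direction it comes from~\eqref{eq:BP-upper-Y} combined with $\hat y=y$. I do not expect any genuine obstacle beyond this bookkeeping; in particular, neither nonemptiness of $S$ nor attainment is needed, because the argument compares feasible sets directly.
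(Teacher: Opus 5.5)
Your proposal is correct and follows essentially the same route as the paper. Both arguments observe that the objective $F(x,y)$ does not involve $\hat y$, and then match feasible points $(x,y)\leftrightarrow(x,y,y)$ via Lemma~\ref{lem:fixed-point-GNEP}. Your additional bookkeeping makes explicit what the paper's two-line chain of equivalences leaves implicit: that $\GNEP(x)\subseteq Y$, and that the coupling constraint forces every feasible point to have the form $(x,y,y)$.
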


\begin{proof}
  The two problems have the same upper-level objective function. It is
  therefore enough to compare their feasible points. By
  Lemma~\ref{lem:fixed-point-GNEP},
  \begin{align*}
    (x,y)\text{ is feasible for~\eqref{eq:Optimistic-SLMF-problem}}
    &\iff
      x\in X,\ G(x,y)\leq0,\ y\in S(x,y)\\
    &\iff
      (x,y,y)\text{ is feasible for~\eqref{eq:BP-upper-problem}}.
  \end{align*}
  This proves the equivalence.
\end{proof}

The next proposition shows that the reformulation of
Theorem~\ref{thm:Reduction-SLMF-BP} preserves convex/linear data and
that it can be done efficiently.

\begin{proposition}
  \label{prop:optimistic-reformulation-structure}
  Suppose that problem~\eqref{eq:Optimistic-SLMF-problem} has convex
  data.
  Then, the bilevel reformulation~\eqref{eq:BP-upper-problem}
  has convex data as well.

  If, in addition, problem~\eqref{eq:Optimistic-SLMF-problem} has
  linear data, then the bilevel
  reformulation~\eqref{eq:BP-upper-problem} has linear data. Moreover,
  under the standard explicit rational encoding of the linear data,
  the reformulation can be constructed in polynomial time and its
  encoding length is polynomially bounded by the encoding length of
  the original SLMF problem.
\end{proposition}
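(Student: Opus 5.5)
The plan is to treat \eqref{eq:BP-upper-problem} as an optimistic bilevel problem with a single follower. The leader controls $(x,\hat y)$ in the set $X\times Y$. The follower controls $y$, has objective $\bar f(x,\hat y,y)\define\sum_{i\in I}f_i(x,y_i,\hat y_{-i})$, and has feasible set $\bar K(x,\hat y)$ given by \eqref{eq:BP-lower-feasibility}--\eqref{eq:BP-lower-improvement}. The upper level has objective $F(x,y)$ and coupling constraints $G(x,y)\le 0$, $\hat y-y\le 0$ and $y-\hat y\le 0$. I would then check the convex-data and linear-data conditions of Section~\ref{subsec:convexity} item by item, with $\bar f$ and $\bar K$ in the role of the single follower's data.

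For convex data, the leader side is straightforward.
\begin{itemize}
\item $X\times Y$ is convex, since $X$ and each $Y_i$ are convex.
\item $F$ is jointly convex and does not depend on $\hat y$.
\item The added coupling constraints are affine, so the full coupling function $(G,\hat y-y,y-\hat y)$ is jointly convex.
\item The follower's ground set is $Y$, which is convex.
\end{itemize}
For fixed $(x,\hat y)$, the map $y\mapsto\bar f$ is a separable sum of the convex functions $f_i(x,\cdot,\hat y_{-i})$, hence convex. The set $\bar K(x,\hat y)$ is the intersection of three families of sets:
\begin{itemize}
\item the product $\prod_i K_i(x,\hat y_{-i})$, which is convex;
\item the sublevel sets $\{y_i: f_i(x,y_i,\hat y_{-i})\le f_i(x,\hat y_i,\hat y_{-i})\}$, which are convex because $f_i(x,\cdot,\hat y_{-i})$ is convex and the right-hand side is a constant for fixed $(x,\hat y)$;
\item implicitly $Y$, which is convex.
\end{itemize}
So the lower-level problem is convex for every parameter.

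The one subtlety is that the convex-data definition requires $f_i(x,\cdot,y_{-i})$ to be convex only for $(x,y_{-i})\in X\times Y_{-i}$. We therefore need $\hat y_{-i}\in Y_{-i}$, and this is guaranteed by \eqref{eq:BP-upper-Y}.

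For linear data, fully linear games give $f_i=c_i^\top y_i$. Then:
\begin{itemize}
\item $\bar f=\sum_i c_i^\top y_i$, which is linear in $y$ and independent of the parameters.
\item Constraint \eqref{eq:BP-lower-feasibility} reads $A_ix+B_iy_i+C_i\hat y_{-i}\le b_i$, which is linear.
\item Constraint \eqref{eq:BP-lower-improvement} reads $c_i^\top y_i-c_i^\top\hat y_i\le 0$, which is a linear inequality in $(y_i,\hat y_i)$.
\end{itemize}
Hence the single follower's problem has the form $\min_y\{\bar c^\top y:\bar A x+\bar B y+\bar C\hat y\le\bar b\}$ with $\bar B$ block diagonal, assembled from the $B_i$ and $c_i^\top$, intersected with the polyhedron $Y$. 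At the upper level, the objective is $d^\top x+0^\top\hat y+e^\top y$. The leader set $X\times Y$ is a polyhedron. The coupling constraints $Tx+Wy-h\le0$ together with $\pm(\hat y-y)\le 0$ are linear.

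The step that needs the most care is the encoding argument, even though it is routine. Each new matrix entry is either an original entry (from $A_i$, $B_i$, $C_i$, $c_i$, $T$, $W$, $d$, $e$, $h$, $b_i$), its negative, $0$, or $\pm1$, placed in block positions that are computable in time polynomial in $p$, $q$, $N$ and the row counts. The total number of rows is at most the original count plus $N+2q$ plus the number of rows describing $Y$ again. The number of columns is $p+2q$. Any sign flip or rearrangement only copies rational numbers, so each entry's bit length is at most one more than the original. Summing these bounds, the encoding length is linear in the original one, and the construction is a direct write-out of blocks, so it runs in polynomial time.
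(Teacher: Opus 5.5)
Your proposal is correct and follows essentially the same route as the paper: you check directly that, for fixed $(x,\hat y)$, the objective and the constraints \eqref{eq:BP-lower-feasibility}--\eqref{eq:BP-lower-improvement} are convex (resp.\ linear) in $y$, that the inherited upper-level data together with $\hat y\in Y$ and $\hat y=y$ are convex (resp.\ linear), and you count the copied variables and constraints for the encoding bound; your observation that \eqref{eq:BP-upper-Y} is what ensures $\hat y_{-i}\in Y_{-i}$, so that the convexity hypotheses on $f_i$ and $K_i$ apply, is a useful detail the paper leaves implicit. The only overstatement is calling the new encoding length \emph{linear} in the old one, which can fail under a dense matrix encoding (the $2q$ rows encoding $\hat y=y$ over $p+2q$ columns alone contribute on the order of $q(p+q)$ entries), but your row, column and bit-length counts still give the polynomial bound that the proposition actually claims.
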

\begin{proof}
  The first statement follows directly from the construction in
  Theorem~\ref{thm:Reduction-SLMF-BP}. Indeed, for fixed
  $(x,\hat y)$, the objective function in~\eqref{eq:BP-lower-objective}
  is convex in $y$, and the constraints
  in~\eqref{eq:BP-lower-feasibility} and
  \eqref{eq:BP-lower-improvement} define convex sets in $y$.
  Moreover, the upper-level constraints inherited from
  problem~\eqref{eq:Optimistic-SLMF-problem} are convex, the
  restriction $\hat y\in Y$ is convex, and
  \eqref{eq:BP-upper-coupling} is linear.

  Now suppose that the original SLMF problem has linear data. Then
  the objective function in~\eqref{eq:BP-lower-objective} is fully
  linear (i.e., it is linear in all upper and lower variables),
  the constraints in~\eqref{eq:BP-lower-feasibility} are linear, and
  the optimality inequalities in~\eqref{eq:BP-lower-improvement}
  reduce to
  \begin{equation*}
    c_i^\top y_i \leq c_i^\top \hat y_i
    \quad \forall i\in I.
  \end{equation*}
  Hence, all upper- and lower-level objective functions and
  constraints of~\eqref{eq:BP-upper-problem} are linear.

  Finally, the reformulation introduces only one copy $\hat y$ of the
  followers' strategy vector and, for every follower, only copies of
  constraints already present in its optimization problem together
  with one additional linear optimality inequality. Consequently,
  the number of variables and constraints, as well as their encoding
  length, grows at most polynomially in the size of the original
  linear SLMF instance.
\end{proof}

The reformulation is obtained by lifting the fixed-point condition
$y\in S(x,y)$ into the two conditions $y\in S(x,\hat y)$ and $\hat
y=y$.
The coupling constraint~\eqref{eq:BP-upper-coupling} allows us to
impose the simple upper-level restriction $\hat y\in Y$. This avoids
adding fixed-point feasibility restrictions of the form
$\hat{y}_i\in K_i(x,\hat{y}_{-i})$, which may fail to be convex
constraints in the lifted upper-level variables $(x,\hat y)$.

\begin{remark}
  \label{rem:coupling-penalization}
  Theorem~\ref{thm:Reduction-SLMF-BP} states that an optimistic SLMF
  game can be transformed into an equivalent bilevel optimization
  problem.
  For solvable instances with rational linear data, applying
  \textcite{HenkeEtAl:2025:OnCoupling} yields a bilevel problem
  without coupling constraints that preserves linear data and has the
  same global optimal solutions after projection onto the original
  variables.
\end{remark}

\section{Pessimistic SLMF Problems as Optimistic Trilevel Programs}
\label{sec:pessimistic-to-three-level}

Our aim in this section is again to explore the potential of a
bilevel-type reformulation, but now for pessimistic
SLMF games. As shown by \textcite{Zeng:2020:APractical}, pessimistic
bilevel problems can be solved, under suitable assumptions, through a
tight optimistic bilevel relaxation and a subsequent recovery step.
Hence, it might be natural to expect the same result with multiple followers.
Thus, after Section~\ref{sec:optimistic} one would anticipate the
possibility of reducing pessimistic SLMF problems to bilevel
programming as well. However, in Section~\ref{subsec:tri_to_pess}
we will show that optimistic trilevel problems, whose rational linear
decision version is $\Sigma_2^p$-complete, see
\textcite{Jeroslow1985:Polynomial,Sugishita-Carvalho:2026},
can be equivalently reformulated as pessimistic SLMF problems.
Moreover, following the approach developed in
Section~\ref{sec:optimistic}, we will show in
Section~\ref{subsec:pess_to_tri} that the converse also holds, meaning
that any pessimistic SLMF (under a mild attainability hypothesis) can
be also equivalently reformulated as an optimistic trilevel problem.

\subsection{Trilevel Problems as Pessimistic SLMF Problems}
\label{subsec:tri_to_pess}

Consider now the trilevel problem
\begin{subequations}\label{eq:Tri-upper}
  \begin{align}
    \min_{x,y,z}\quad &F_1(x,y,z)\label{eq:Tri-upper-objective}\\
    \text{s.t.}\quad &x\in X,\label{eq:Tri-upper-X}\\
                      &H_1(x,y,z)\leq 0,\label{eq:Tri-upper-G}\\
                      &(y,z)\in S_1(x),\label{eq:Tri-upper-middle}
  \end{align}
\end{subequations}
where $S_1(x)$ is the solution set of the middle-level problem
\begin{subequations}\label{eq:Tri-middle}
  \begin{align}
    \min_{y,z}\quad &F_2(x,y,z)\label{eq:Tri-middle-objective}\\
    \text{s.t.}\quad &y\in Y(x),\label{eq:Tri-middle-Y}\\
                    &H_2(x,y,z)\leq0,\label{eq:Tri-middle-G}\\
                    &z\in S_2(x,y),\label{eq:Tri-middle-lower}
  \end{align}
\end{subequations}
and $S_2(x,y)$ is the solution set of the lower-level problem
\begin{subequations}\label{eq:Tri-lower}
  \begin{align}
    \min_z\quad &F_3(x,y,z)\label{eq:Tri-lower-objective}\\
    \text{s.t.}\quad &z\in Z(x,y).\label{eq:Tri-lower-Z}
  \end{align}
\end{subequations}

Our goal is to reduce this problem to a pessimistic SLMF problem. To
do so, we first need to encode the middle and lower levels as a
GNEP. To this end, fix $x\in X$ and $\hat{y}\in Y(x)$.
The feasible region of
problem~\eqref{eq:Tri-middle}--\eqref{eq:Tri-lower} is given by
\begin{equation*}
  W_0(x)\define \{(y,z):y\in Y(x),\ H_2(x,y,z)\leq0,\ z\in
  S_2(x,y)\}.
\end{equation*}
Let $W(x,\hat{y}) \define W_0(x)\times S_2(x,\hat{y})$.

\begin{lemma}\label{lemma:FeasibleRegionByGNEP}
  For every $x\in X$ and every $\hat{y}\in Y(x)$, the product
  $W(x,\hat{y})$ of the feasible region of the bilevel
  problem~\eqref{eq:Tri-middle}--\eqref{eq:Tri-lower}
  for $x$ together with the solution set of \eqref{eq:Tri-lower} for
  $(x,\hat{y})$ can be described as the equilibrium set of a
  two-player generalized Nash equilibrium problem.
\end{lemma}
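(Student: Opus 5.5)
We will construct a two-player GNEP, depending on the parameters $(x,\hat y)$, whose equilibrium set is exactly $W(x,\hat y)=W_0(x)\times S_2(x,\hat y)$. Player~1 controls $(y,z)$ and player~2 controls a variable $w$ that plays the role of the lower-level response to $\hat y$. The guiding idea is the same device as in Lemma~\ref{lem:fixed-point-GNEP}. The optimality condition $z\in S_2(x,y)$ is a fixed-point-type condition, and we encode it by letting player~2 supply a competitor for player~1's $z$ inside player~1's own feasible set.

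The construction has three pieces.

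\textbf{Player~2.} Given $(y,z)$, player~2 solves
\begin{equation*}
  \min_w\ \{F_3(x,\hat y,w) : w\in Z(x,\hat y)\}.
\end{equation*}
Its feasible set and objective do not depend on player~1's choice, so its best-response set is exactly $S_2(x,\hat y)$.

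\textbf{Player~1.} Player~1 gets a constant objective, say $0$, and the feasible set
\begin{equation*}
  K_1(x,w)\define\{(y,z): y\in Y(x),\ H_2(x,y,z)\le 0,\ z\in Z(x,y),\ F_3(x,y,z)\le F_3(x,y,\tilde z)\ \forall \tilde z\in Z(x,y)\}.
\end{equation*}
The difficulty is that this set does not use $w$ at all and is just $W_0(x)$. A set of this form would defeat the purpose of the lemma, since the GNEP is later meant to have data that are finitely and tractably described (and linear under linear data, as claimed in the abstract). I therefore want player~1's constraints to involve only finitely many inequalities, and to use player~2's variable as the competitor.

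\textbf{Symmetric rewrite.} The natural attempt is to make the roles symmetric. Player~2 controls $(u,w)$ and player~1 controls $(y,z)$. Player~1 minimizes $F_3(x,y,z)$ over $\{z\in Z(x,y)\}$, with $y$ constrained by $y\in Y(x)$ and $H_2\le0$. Equilibrium then forces $z$ to be optimal for the fixed $y$, so $z\in S_2(x,y)$. Since $y$ is player~1's own choice, however, minimizing over $(y,z)$ jointly would also optimize over $y$. To avoid this, I split the roles so that each player's own lower-level variable is optimized while the other variable is held fixed:
\begin{itemize}
\item player~A controls $y$, has a constant objective, and faces the constraints $y\in Y(x)$ and $H_2(x,y,z)\le 0$, where $z$ is held fixed;
\item player~B controls $(z,w)$ and minimizes the separable objective $F_3(x,y,z)+F_3(x,\hat y,w)$ over $Z(x,y)\times Z(x,\hat y)$, where $y$ is held fixed.
\end{itemize}

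\textbf{Verification.} Player~B's problem is separable in $z$ and $w$. At an equilibrium, player~B's optimality therefore gives exactly $z\in S_2(x,y)$ and $w\in S_2(x,\hat y)$. Player~A's condition with a constant objective is just feasibility, namely $y\in Y(x)$ and $H_2(x,y,z)\le0$. Together these give $(y,z)\in W_0(x)$ and $w\in S_2(x,\hat y)$. Conversely, any point of $W(x,\hat y)$ satisfies every player's feasibility constraint and attains the minimum in player~B's problem, so it is an equilibrium. I would check this equivalence carefully in both directions, as in Lemma~\ref{lem:fixed-point-GNEP}, including the degenerate cases where $S_2$ is empty.

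The main obstacle is exactly this splitting of $y$ and $z$ between the players. If a single player controlled both, the equilibrium condition would impose joint optimality in $(y,z)$ rather than the bilevel condition that $z$ is optimal for a fixed $y$. The role of $\hat y$, which enters only through player~B's second block, should be double-checked as well. If the intended statement needs the copy for the later robust comparison, it can be carried along harmlessly in this way.
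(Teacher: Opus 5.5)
Your final construction is exactly the game \eqref{eq:2-player-game} of the paper: player~A chooses $y$ with zero objective subject to $y\in Y(x)$ and $H_2(x,y,z)\le 0$, and player~B chooses $(z,w)$ to minimize $F_3(x,y,z)+F_3(x,\hat y,w)$ over $Z(x,y)\times Z(x,\hat y)$. Your verification is also the paper's argument (the constant objective reduces player~A's condition to feasibility, separability over a product set gives $z\in S_2(x,y)$ and $w\in S_2(x,\hat y)$, and conversely), so the proposal is correct and takes essentially the same approach.
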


\begin{proof}
  Consider the two-player generalized game given by
  \begin{equation}
    \label{eq:2-player-game}
    \begin{split}
      P_1(x,z)
      & \define
      \begin{cases}
        \displaystyle\min_y&0\\
        \text{s.t.}&y\in Y(x),\\
        &H_2(x,y,z)\leq0,
      \end{cases}
      \\
      P_2(x,y,\hat{y})
      & \define
      \begin{cases}
        \displaystyle\min_{(z,w)}&F_3(x,y,z) + F_3(x,\hat{y},w)\\
        \text{s.t.}&z\in Z(x,y), w\in Z(x,\hat{y}).
      \end{cases}
    \end{split}
  \end{equation}
  Then,
  \begin{align*}
    & (y,z,w)\text{ is an equilibrium for }(x,\hat{y})
    \\
    \iff & y\text{ solves }P_1(x,z)\text{ and }(z,w)\text{
           solves }P_2(x,y,\hat{y})
    \\
    \iff & y\in Y(x), H_2(x,y,z)\leq 0, z\in S_2(x,y)\text{ and }w\in
           S_2(x,\hat{y})
    \\
    \iff & (y,z,w)\in W(x,\hat{y}).
  \end{align*}
  Thus, the equilibrium set of the two-player game is exactly
  $W(x,\hat{y})$, which completes the proof.
\end{proof}

For each $x\in X$ and each $\hat{y}\in Y(x)$, let $W(x,\hat{y})$ be the
set of equilibria of the two-player game given by~\eqref{eq:2-player-game}.
We have that
\begin{equation}\label{eq:W-feasible-equivalence}
  (y,z,w)\in W(x,\hat{y})
  \iff
  (y,z)\text{ is feasible
    for~\eqref{eq:Tri-middle}--\eqref{eq:Tri-lower} and }w\in
  S_2(x,\hat{y}).
\end{equation}

Now, we propose a pessimistic SLMF problem in which the leader chooses a
copy~$(\hat y,\hat z)$ of the middle- and lower-level variables and
imposes its optimality in a robust sense against every $(y,z,w)\in
W(x,\hat{y})$.
With $D_W \define \dom(W) = \defset{(x,\hat{y})}{W(x,\hat{y}) \neq \emptyset}$,
this leads to the formulation
\begin{subequations}\label{eq:Tri-SLMF-upper}
  \begin{align}
    \min_{x,\hat y,\hat z}\quad
    &F_1(x,\hat y,\hat z)\label{eq:Tri-SLMF-upper-objective}\\
    \text{s.t.}\quad
    &x\in X,\label{eq:Tri-SLMF-upper-X}\\
    &\hat y\in Y(x),\label{eq:Tri-SLMF-upper-Y}\\
    &\hat z\in Z(x,\hat y),\label{eq:Tri-SLMF-upper-Z}\\
    &(x,\hat{y}) \in D_W,\\
    &H_1(x,\hat y,\hat z)\leq0,\label{eq:Tri-SLMF-upper-G1}\\
    &H_2(x,\hat y,\hat z)\leq0,\label{eq:Tri-SLMF-upper-G2}\\
    &F_2(x,\hat y,\hat z)-F_2(x,y,z)\leq0
    &\forall (y,z,w)\in W(x,\hat{y}),\label{Constraint:Robust-optimality-F2}\\
    &F_3(x,\hat y,\hat z)-F_3(x,\hat{y},w)\leq0
    &\forall (y,z,w)\in W(x,\hat{y}).\label{Constraint:Robust-optimality-F3}
  \end{align}
\end{subequations}

\begin{theorem}\label{thm:ThreeLevelToPessimistic}
  The trilevel problem~\eqref{eq:Tri-upper}--\eqref{eq:Tri-lower} and
  the pessimistic SLMF problem~\eqref{eq:Tri-SLMF-upper}, with
  $W(x,\hat{y})$ being defined as the equilibrium set of the
  two-player game given by~\eqref{eq:2-player-game}, are equivalent.
  More precisely, we have
  \begin{equation*}
    (x,y,z)\text{ solves~\eqref{eq:Tri-upper}--\eqref{eq:Tri-lower}}
    \iff
    (x,\hat y,\hat z) = (x,y,z)\text{ solves~\eqref{eq:Tri-SLMF-upper}}.
  \end{equation*}
\end{theorem}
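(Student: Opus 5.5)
Both problems minimize the same objective $F_1$, so, as in the proof of Theorem~\ref{thm:Reduction-SLMF-BP}, I will show that the feasible sets coincide. Precisely, I will show that $(x,y,z)$ is feasible for~\eqref{eq:Tri-upper}--\eqref{eq:Tri-lower} if and only if $(x,\hat y,\hat z)=(x,y,z)$ is feasible for~\eqref{eq:Tri-SLMF-upper}. The only tool needed is the characterization~\eqref{eq:W-feasible-equivalence} from Lemma~\ref{lemma:FeasibleRegionByGNEP}: $(y,z,w)\in W(x,\hat y)$ if and only if $(y,z)\in W_0(x)$ and $w\in S_2(x,\hat y)$.

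For the forward direction, let $(x,y,z)$ be feasible for the trilevel problem, so $x\in X$, $H_1\le 0$ and $(y,z)\in S_1(x)$. Then $y\in Y(x)$, $H_2(x,y,z)\le0$, and $z\in S_2(x,y)$; in particular $z\in Z(x,y)$. Setting $(\hat y,\hat z)=(y,z)$, constraints~\eqref{eq:Tri-SLMF-upper-X}--\eqref{eq:Tri-SLMF-upper-Z} and~\eqref{eq:Tri-SLMF-upper-G1}--\eqref{eq:Tri-SLMF-upper-G2} hold immediately. The triple $(y,z,z)$ lies in $W(x,\hat y)$ by~\eqref{eq:W-feasible-equivalence}, so $(x,\hat y)\in D_W$. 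For the robust constraints, take any $(y',z',w)\in W(x,\hat y)$. Then $(y',z')$ is feasible for the middle level, and optimality of $(\hat y,\hat z)\in S_1(x)$ gives~\eqref{Constraint:Robust-optimality-F2}. Also $w\in S_2(x,\hat y)$ and $\hat z\in S_2(x,\hat y)$ give equal $F_3$ values, hence~\eqref{Constraint:Robust-optimality-F3}.

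For the converse, let $(x,\hat y,\hat z)$ be feasible for~\eqref{eq:Tri-SLMF-upper}. The key step is to show $\hat z\in S_2(x,\hat y)$. The condition $(x,\hat y)\in D_W$ provides some $(y,z,w)\in W(x,\hat y)$, hence some $w\in S_2(x,\hat y)$. By~\eqref{Constraint:Robust-optimality-F3}, $F_3(x,\hat y,\hat z)\le F_3(x,\hat y,w)=\min_{Z(x,\hat y)}F_3(x,\hat y,\cdot)$. Together with $\hat z\in Z(x,\hat y)$ this yields $\hat z\in S_2(x,\hat y)$.

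Next I show $(\hat y,\hat z)\in S_1(x)$. With $\hat y\in Y(x)$, $H_2\le0$ and $\hat z\in S_2(x,\hat y)$, the pair $(\hat y,\hat z)$ lies in $W_0(x)$. For any $(y,z)\in W_0(x)$, the triple $(y,z,w)$, with $w$ chosen as above, lies in $W(x,\hat y)$ by~\eqref{eq:W-feasible-equivalence}. Constraint~\eqref{Constraint:Robust-optimality-F2} then gives $F_2(x,\hat y,\hat z)\le F_2(x,y,z)$, so $(\hat y,\hat z)\in S_1(x)$. Combined with $H_1\le0$ and $x\in X$, the point $(x,\hat y,\hat z)$ is trilevel feasible.

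Since the feasible sets agree and the objectives are identical, the optimal solutions coincide, which proves the stated equivalence. The main obstacle is the converse direction: the role of the constraint $(x,\hat y)\in D_W$ is to ensure that the robust constraints are not satisfied vacuously. The argument hinges on this constraint being nonvacuous, since it supplies a witness $w$ that can be reused in every triple $(y,z,w)$. Without it, an empty $W(x,\hat y)$ would make any $(\hat y,\hat z)$ feasible.
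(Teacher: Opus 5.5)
Your proof is correct and follows essentially the same argument as the paper. You compare feasible sets via $W(x,\hat y)=W_0(x)\times S_2(x,\hat y)$, use the witness from $D_W$ with the $F_3$ robust constraint to get $\hat z\in S_2(x,\hat y)$, read off middle-level optimality from the $F_2$ robust constraint, and handle the other direction directly. Your explicit reuse of the witness $w$ to cover every $(y,z)\in W_0(x)$ makes precise a step the paper leaves implicit when it invokes \eqref{eq:W-feasible-equivalence}.
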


\begin{proof}
  Fix $x\in X$. First, suppose that $(x,\hat y,\hat z)$ is
  feasible for~\eqref{eq:Tri-SLMF-upper}. Since
  $(x,\hat y)\in\dom(W)$, there exists
  $(y,z,w)\in W(x,\hat y)$.

  By~\eqref{eq:W-feasible-equivalence},
  we have $w\in S_2(x,\hat y)$. Moreover, $\hat z\in Z(x,\hat y)$ and
  the robust constraint~\eqref{Constraint:Robust-optimality-F3} imply
  \begin{equation*}
    F_3(x,\hat y,\hat z)
    \leq F_3(x,\hat y,w).
  \end{equation*}
  Since $w\in S_2(x,\hat y)$ and $\hat z\in Z(x,\hat y)$, the reverse
  inequality holds by optimality of $w$. Thus, we get
  $\hat z\in S_2(x,\hat y)$.
  Hence, together with
  constraints~\eqref{eq:Tri-SLMF-upper-Y} and \eqref{eq:Tri-SLMF-upper-G2}
  we get that $(\hat{y},\hat{z})$ is feasible for
  problems~\eqref{eq:Tri-middle}--\eqref{eq:Tri-lower}.
  By~\eqref{eq:W-feasible-equivalence},
  the robust constraint~\eqref{Constraint:Robust-optimality-F2} is
  equivalent to
  \begin{equation*}
    F_2(x,\hat y,\hat z)\leq F_2(x,y,z)
  \end{equation*}
  for every $(y,z)$ that is feasible for
  problems~\eqref{eq:Tri-middle}--\eqref{eq:Tri-lower}.
  Hence, we obtain
  \begin{equation*}
    (\hat y,\hat z)\in S_1(x).
  \end{equation*}

  Conversely, suppose that $(\hat y,\hat z)\in S_1(x)$. Then
  $(\hat y,\hat z)$ is feasible for the middle-level
  problem~\eqref{eq:Tri-middle}. In particular,
  \begin{equation*}
    \hat y\in Y(x),\qquad
    H_2(x,\hat y,\hat z)\leq 0,
    \qquad
    \hat z\in S_2(x,\hat y)\subseteq Z(x,\hat y).
  \end{equation*}
  Hence, $(\hat y,\hat z)\in W_0(x)$ and
  $S_2(x,\hat y)\neq\emptyset$. Therefore,
  \begin{equation*}
    W(x,\hat y)
    =W_0(x)\times S_2(x,\hat y)\neq\emptyset,
  \end{equation*}
  which yields $(x,\hat y)\in D_W$.

  Now, let $(y,z,w)\in W(x,\hat y)$. By~\eqref{eq:W-feasible-equivalence},
  the pair $(y,z)$ is feasible for the middle-level problem and
  $w\in S_2(x,\hat y)$. Since $(\hat y,\hat z)\in S_1(x)$, we have
  \begin{equation*}
    F_2(x,\hat y,\hat z)
    \leq
    F_2(x,y,z),
  \end{equation*}
  and thus the robust constraint~\eqref{Constraint:Robust-optimality-F2}
  is satisfied. Moreover, since both
  $\hat z,w\in S_2(x,\hat y)$, they attain the same lower-level
  objective value, and hence
  \begin{equation*}
    F_3(x,\hat y,\hat z)
    =
    F_3(x,\hat y,w).
  \end{equation*}
  Consequently, the robust
  constraint~\eqref{Constraint:Robust-optimality-F3} is satisfied as
  well.

  The remaining upper-level objective and constraints coincide with
  those of~\eqref{eq:Tri-upper} and, thus, the result follows.
\end{proof}

Let us now study the preservation of convex data of the latter theorem. We say that Problem \eqref{eq:Tri-upper}-\eqref{eq:Tri-middle}-\eqref{eq:Tri-lower} has
convex data if
\begin{itemize}
\item[(i)] The set $X$ is convex, and the functions
  $F_1(\cdot)$ and $H_1(\cdot)$ are convex in~$(x,y,z)$.
\item[(ii)] For every $x\in X$, the set $Y(x)$ is convex
  and the functions $F_2(x,\cdot,\cdot)$ and $H_2(x,\cdot,\cdot)$ are convex
  in~$(y,z)$.
\item[(iii)] For every $x \in X$ and every $y \in Y(x)$, the set $Z(x,y)$ is
  convex and the function $F_3(x,y,\cdot)$ is convex
  in~$z$.
\end{itemize}

  With this definition, if the trilevel problem~\eqref{eq:Tri-upper}-\eqref{eq:Tri-middle}-\eqref{eq:Tri-lower}
  has convex data, the robust
  constraints~\eqref{Constraint:Robust-optimality-F2}
  and~\eqref{Constraint:Robust-optimality-F3} are concave in the
  followers' variables $(y,z,w)$. However, the reformulation of
  Theorem~\ref{thm:ThreeLevelToPessimistic} might fail to preserve
  convex data due to the constraints
  \eqref{eq:Tri-SLMF-upper-Y}--\eqref{eq:Tri-SLMF-upper-Z}, which are
  not necessarily convex in $(x,\hat{y})$ and
  $(x,\hat{y},\hat{z})$ respectively.
  This can be addressed by modifying the first player
  of the GNEP~\eqref{eq:2-player-game}.

  Let $Y$ and $Z$ be convex ambient sets such that
  $Y(x)\subseteq Y$, for every $x\in X$, and
  $Z(x,y)\subseteq Z$, for every $x\in X$ and $y\in Y(x)$.
  We regard the functions $F_1,F_2,F_3,H_1,H_2$ as defined on
  $X\times Y\times Z$, with the convexity properties stated above.

  In the auxiliary game, for $\hat y\notin Y(x)$, define both
  followers' feasible sets to be empty. If $\hat y\in Y(x)$ but
  $y\notin Y(x)$, define the second follower's feasible set to be
  empty.

  For $\hat y\in Y(x)$, the first player solves the
  problem~$P_1(x,\hat{y},\hat{z},z)$ given by
  \begin{align*}
    \min_{(y,u,v)} \quad &0\\
    \text{s.t.} \quad &y\in Y(x),\\
                         &H_2(x,y,z)\leq0,\\
                         &u\in Y(x),\quad v\in Z(x,\hat{y}),\\
                         &u=\hat{y},\quad v=\hat{z}.
  \end{align*}
  Define
  \begin{equation*}
    \widetilde W(x,\hat{y},\hat{z})
    =
    \Defset{(y,z,u,v,w)}
    {
      \begin{array}{l}(y,u,v)\text{ solves }P_1(x,\hat{y},\hat{z},z),\\
        (z,w)\text{ solves }P_2(x,y,\hat{y})
      \end{array}
    }.
  \end{equation*}
  Then $\widetilde W$ encodes the
  constraints~\eqref{eq:Tri-SLMF-upper-Y}--\eqref{eq:Tri-SLMF-upper-Z}
  as feasibility constraints: if one of them fails, $P_1$ becomes
  infeasible. Thus,~\eqref{eq:Tri-SLMF-upper} can be equivalently
  rewritten as
  \begin{subequations}\label{eq:Tri-SLMF-upper2}
    \begin{align}
      \min_{x,\hat y,\hat z}\quad
      &F_1(x,\hat y,\hat z)\label{eq:Tri-SLMF-upper-objective2}\\
      \text{s.t.}\quad
      &(x,\hat y,\hat z)\in
        (X\times Y\times Z)\cap\dom(\widetilde W),
        \label{eq:Tri-SLMF-upper-XYZ}\\
      &H_1(x,\hat y,\hat z)\leq0,
        \label{eq:Tri-SLMF-upper-G12}\\
      &H_2(x,\hat y,\hat z)\leq0,
      &\forall (y,z,u,v,w)\in
        \widetilde W(x,\hat y,\hat z),
        \label{eq:Tri-SLMF-upper-G22}\\
      &F_2(x,\hat y,\hat z)-F_2(x,y,z)\leq0,
      &\forall (y,z,u,v,w)\in
        \widetilde W(x,\hat{y},\hat{z}),
        \label{Constraint:Robust-optimality-F22}\\
      &F_3(x,\hat y,\hat z)-F_3(x,\hat{y},w)\leq0,
      &\forall (y,z,u,v,w)\in
        \widetilde W(x,\hat{y},\hat{z}).
        \label{Constraint:Robust-optimality-F32}
    \end{align}
  \end{subequations}
  Since nonemptiness of $\widetilde W(x,\hat y,\hat z)$ is imposed,
  the robust constraint~\eqref{eq:Tri-SLMF-upper-G22} is equivalent
  to $H_2(x,\hat y,\hat z)\leq0$. Its left-hand side is constant,
  and hence concave, in the followers' variables.

  The leader's objective is convex, and the set
  \begin{equation*}
    \{(x,\hat y,\hat z)\in X\times Y\times Z:
    H_1(x,\hat y,\hat z)\leq0\}
  \end{equation*}
  is convex. The followers' objectives are convex in their own
  variables, and their feasible sets are convex under
  the assumptions above.
  Moreover, the left-hand sides of
  \eqref{Constraint:Robust-optimality-F22} and
  \eqref{Constraint:Robust-optimality-F32} are concave in the
  followers' variables. Thus, this pessimistic SLMF problem has
  convex data in the sense of Section~\ref{subsec:convexity}.

We close this section by completing the picture of reductions with
respect to linear data, as in
proposition~\ref{prop:optimistic-reformulation-structure} and forthcoming proposition
\ref{prop:pessimisticTotrivlevel-reformulation-structure}. We can
directly formulate the following proposition.

\begin{proposition}
  \label{prop:trivlevelTopessimistic-reformulation-structure}
  Suppose that Problem~\eqref{eq:Tri-upper}--\eqref{eq:Tri-lower}
  has linear data. Then, after removing terms independent of
  $(z,w)$ from the second follower's objective
  in~\eqref{eq:2-player-game}, the pessimistic SLMF
  formulation~\eqref{eq:2-player-game}--\eqref{eq:Tri-SLMF-upper}
  has linear data.

  Moreover, under the standard explicit rational encoding of
  the linear data, the pessimistic SLMF reformulation can be
  constructed in polynomial time and its encoding length is
  polynomially bounded by the encoding length of the original
  trilevel problem.
\end{proposition}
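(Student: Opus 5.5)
The proof is a direct inspection of each ingredient of \eqref{eq:2-player-game}--\eqref{eq:Tri-SLMF-upper}, checked against items (i)--(iv) of the definition of linear data for pessimistic SLMF problems. Under linear trilevel data we have
\begin{itemize}
\item $F_1,F_2,F_3$ linear, e.g.\ $F_3(x,y,z)=a^\top x+b^\top y+c^\top z$;
\item $H_1,H_2$ affine;
\item $X$ a polyhedron;
\item $Y(x)=\{y: A_Y x+B_Y y\le b_Y\}$ and $Z(x,y)=\{z: A_Z x+B_Z y+C_Z z\le b_Z\}$.
\end{itemize}

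\textbf{Leader's objective.} The pessimistic leader's objective is $F_1(x,\hat y,\hat z)$. Since the leader's variable is $(x,\hat y,\hat z)$, this is linear, so (i) holds.

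\textbf{Leader's feasible set.} The constraints \eqref{eq:Tri-SLMF-upper-X}--\eqref{eq:Tri-SLMF-upper-Z} and \eqref{eq:Tri-SLMF-upper-G1}--\eqref{eq:Tri-SLMF-upper-G2} are finitely many linear inequalities in $(x,\hat y,\hat z)$. The constraint $(x,\hat y)\in D_W$ is the implicit domain constraint of the pessimistic format. The ambient follower sets can be taken to be the whole spaces, or the polyhedra cut out by the $y$- and $z$-parts. So (ii) holds, with the parameter of the followers' game being $(x,\hat y)$.

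\textbf{Robust coupling constraints.} The left-hand sides of \eqref{Constraint:Robust-optimality-F2} and \eqref{Constraint:Robust-optimality-F3} are differences of linear functions, hence of the form $T(x,\hat y,\hat z)+W(y,z,w)-h$. This gives (iii).

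\textbf{Followers' game (the only real check).} We must verify that the game is fully linear in the sense of Definition~\ref{def:FullyLinear}.
\begin{itemize}
\item Player~1 has objective $0=0^\top y$. Its constraints $y\in Y(x)$ and $H_2(x,y,z)\le0$ are linear in $(x,y)$ and in the other player's variable $z$, so they fit the form $A_1(x,\hat y)+B_1y+C_1(z,w)\le b_1$.
\item Player~2 has objective
\[
F_3(x,y,z)+F_3(x,\hat y,w)=c^\top z+c^\top w+\bigl(2a^\top x+b^\top y+b^\top\hat y\bigr).
\]
The bracketed term depends on $x$, $\hat y$ and the opponent's $y$, but not on the player's own variable $(z,w)$. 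Dropping it does not change player~2's best responses, so the equilibrium set $W(x,\hat y)$ and Lemma~\ref{lemma:FeasibleRegionByGNEP} are unaffected. This is exactly the modification allowed in the statement, and what remains is linear in the own variable, as item (i) of Definition~\ref{def:FullyLinear} requires.
\item Player~2's constraints $z\in Z(x,y)$ and $w\in Z(x,\hat y)$ are linear in $(x,\hat y)$, the own variable $(z,w)$, and the opponent's $y$. This matches item (ii) of Definition~\ref{def:FullyLinear}.
\end{itemize}
Hence (iv) holds, and Theorem~\ref{thm:ThreeLevelToPessimistic} applies to the modified game unchanged.

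\textbf{Encoding.} The new instance duplicates the lower-level variable ($w$ alongside $z$) and duplicates the constraint system of $Z$, once in $(x,y,z)$ and once in $(x,\hat y,w)$. Its other blocks copy original blocks, rearranged into leader and follower matrices with zeros inserted. Every entry is an original rational coefficient, a sum of two of them, or $0$. The numbers of variables and constraints are at most a constant multiple of the original ones, so the construction runs in polynomial time with polynomially bounded encoding length. I do not expect a genuine obstacle; the only subtle point is the justification above for removing the $(z,w)$-independent terms from player~2's objective.
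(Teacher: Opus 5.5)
Your proposal is correct and follows the same route as the paper's proof. Writing $F_3(x,y,z)=a^\top x+b^\top y+c^\top z$, both you and the paper drop the $(z,w)$-independent part $2a^\top x+b^\top y+b^\top\hat y$ of the second follower's objective, which leaves $W(x,\hat y)$ unchanged, check that the resulting game and all leader-level data are linear (treating $D_W$ as the usual implicit nonemptiness constraint), and bound the encoding by noting that the construction only copies original variables and constraints; your item-by-item check against (i)--(iv) simply spells this out in more detail.
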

\begin{proof}
  Write
  \begin{equation*}
    F_3(x,y,z)=a^\top x+b^\top y+c^\top z.
  \end{equation*}
  The second follower's objective in~\eqref{eq:2-player-game}
  is then
  \begin{equation*}
    2a^\top x+b^\top y+b^\top\hat y+c^\top z+c^\top w.
  \end{equation*}
  For fixed $(x,y,\hat y)$, removing the terms independent of
  $(z,w)$ leaves the objective $c^\top z+c^\top w$ and does
  not change the set of minimizers. Together with the zero
  objective of the first follower and the linear constraints,
  this yields a fully linear game in the sense of
  Definition~\ref{def:FullyLinear}.

  The leader's objective and all explicit constraints
  in~\eqref{eq:Tri-SLMF-upper} are linear. Finally, the
  construction introduces only copies of variables and
  constraints from the original trilevel problem and performs
  a polynomial number of operations on their rational
  coefficients. Hence, its construction time and encoding
  length are polynomially bounded. The restriction to $D_W$
  is the standard equilibrium nonemptiness requirement and
  does not require an explicit enumeration of that set.
\end{proof}

\subsection{From Pessimistic SLMF Games to Trilevel Problems}
\label{subsec:pess_to_tri}

Consider the pessimistic SLMF
problem~\eqref{eq:Pessimistic-SLMF-problem}. For every $k\in [m]$,
introduce a copy $y^k\in\R^q$ of the followers' strategy vector and
consider
\begin{align*}
  \max_{r_k,y^k}\quad &r_k\\
  \text{s.t.}\quad &r_k-G_k(x,y^k)\leq0,\\
                      &y^k\in \GNEP(x).
\end{align*}
For fixed $x\in X\cap D$, this problem computes the maximum of
$G_k(x,\cdot)$ over $\GNEP(x)$ and represents it by the hypographical
variable $r_k$. If such maximum is attained, then the $k$th robust
constraint in~\eqref{eq:Pessimistic-SLMF-robust} can be represented by
the inequality $r_k\leq 0$. However, this might not be the case. To at
least rule out the problem when $G_k(x,\cdot)$ is unbounded on
$\GNEP(x)$ we can consider the truncated problem
\begin{equation}
  \label{eq:single-hypographical-problem-truncated}
  \begin{aligned}
    \max_{r_k,y^k}\quad &r_k\\
    \text{s.t.}\quad &r_k-G_k(x,y^k)\leq0,\\
    &r_k\leq 1,\\
    &y^k\in \GNEP(x).
  \end{aligned}
\end{equation}
Still, the $k$th robust constraint
in~\eqref{eq:Pessimistic-SLMF-robust} can be represented by the
inequality $r_k\leq 0$ but now under the relaxed condition that the
mapping $y\mapsto \min\{ G_k(x,y),1 \}$ attains its maximum on
$\GNEP(x)$.

Since the problems
in~\eqref{eq:single-hypographical-problem-truncated} are independent
for different indices $k\in [m]$, they can be collected into
\begin{subequations}\label{eq:Hypographical-middle}
  \begin{align}
    \max_{(r_k,y^k)_{k\in [m]}}\quad
    &\sum_{k=1}^m r_k\label{eq:Hypographical-objective}\\
    \text{s.t.}\quad
    &r_k-G_k(x,y^k)\leq 0,
    &\forall k\in[m],\label{eq:Hypographical-G}\\
    &r_k\leq 1
    &\forall k\in[m],\label{eq:truncation-G}\\
    &y^k\in \GNEP(x)
    &\forall k\in [m].\label{eq:multiple-selection-constraint}
  \end{align}
\end{subequations}
The equilibrium inclusions in~\eqref{eq:multiple-selection-constraint}
are independent copies of the same followers' problem. They can
therefore be reformulated simultaneously using the construction of
Section~\ref{sec:optimistic}.

\begin{theorem}\label{thm:Reduction-PessSLMF-3Level}
  Consider the pessimistic SLMF
  problem~\eqref{eq:Pessimistic-SLMF-problem}, and for each $k\in[m]$ define the map $\psi_k: (x,y)\mapsto \min\{G_k(x,y),1\}$. Assume that
  \begin{equation*}
    \label{hip:attainability}\tag{$\mathcal{H}$}
    \forall x\in X\cap D,\forall k\in[m]: \psi_k(x,\cdot)\text{
      attains its maximum over }\GNEP(x).
  \end{equation*}
  Then, Problem~\eqref{eq:Pessimistic-SLMF-problem} admits the
  equivalent trilevel reformulation
  \begin{subequations}\label{eq:SLMF-pessimistic-upper-tri}
    \begin{align}
      \min_{x,(y^k,\hat{y}^k,r_k)_{k\in[m]}}\quad
      &\theta(x)\label{eq:SLMF-pessimistic-upper-objective}\\
      \text{s.t.}\quad
      &x\in X,\label{eq:SLMF-pessimistic-upper-X}\\
      &r_k\leq 0 \quad\forall k\in[m],\label{eq:SLMF-pessimistic-upper-r}\\
      &(y^k,\hat{y}^k,r_k)_{k\in[m]}\in S_1(x),\label{eq:SLMF-pessimistic-upper-middle}
    \end{align}
  \end{subequations}
  where $S_1(x)$ is the solution set of the middle-level problem
  \begin{subequations}\label{eq:SLMF-pessimistic-middle-tri}
    \begin{align}
      \min_{(y^k,\hat{y}^k,r_k)_{k\in[m]}}\quad
      &-\sum_{k=1}^mr_k\label{eq:SLMF-pessimistic-middle-objective}\\
      \text{s.t.}\quad
      &r_k-G_k(x,y^k)\leq0
      &\forall k\in[m],\label{Constraint:ri-less-maxGi}\\
      &r_k\leq 1
      &\forall k\in[m],\label{Constraint:ri-truncation}\\
      & \hat{y}^k \in Y& \forall k \in [m],\\
      &\hat {y}^k=y^k
      &\forall k\in[m],\label{eq:SLMF-pessimistic-middle-coupling}\\
      &(y^k)_{k\in[m]}\in
        S_2\bigl(x,(\hat y^k)_{k\in[m]}\bigr),\label{eq:SLMF-pessimistic-middle-lower}
    \end{align}
  \end{subequations}
  and $S_2(x,(\hat y^k)_{k\in[m]})$ is the solution set of the lower-level problem
  \begin{subequations}\label{eq:SLMF-pessimistic-lower-tri}
    \begin{align}
      \min_{(y^k)_{k\in[m]}}\quad
      &\sum_{k\in[m]}\sum_{i\in I}
        f_i(x,y_i^k,\hat y_{-i}^k)\label{eq:SLMF-pessimistic-lower-objective}\\
      \text{s.t.}\quad
      &y_i^k\in K_i(x,\hat y_{-i}^k)
      &&\forall k\in[m],\ \forall i\in I,\label{eq:SLMF-pessimistic-lower-feasibility}\\
      &f_i(x,y_i^k,\hat y_{-i}^k)
        \leq f_i(x,\hat y_i^k,\hat y_{-i}^k)
      &&\forall k\in[m],\ \forall i\in I.\label{eq:SLMF-pessimistic-lower-improvement}
    \end{align}
  \end{subequations}
  More precisely, $x$ solves \eqref{eq:Pessimistic-SLMF-problem} if
  and only if there exist $(y^k,\hat{y}^k,r_k)_{k\in[m]}$ such that
  $(x,(y^k,\hat{y}^k,r_k)_{k\in[m]})$ solves
  \eqref{eq:SLMF-pessimistic-upper-tri}--\eqref{eq:SLMF-pessimistic-lower-tri}.
\end{theorem}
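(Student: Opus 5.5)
The plan is to describe the middle-level solution set $S_1(x)$ explicitly and then compare feasible sets, since both problems have the same objective $\theta(x)$.

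\textbf{Step 1: the middle level in original terms.} Fix $x\in X$. Apply Lemma~\ref{lem:fixed-point-GNEP} componentwise, as in Theorem~\ref{thm:Reduction-SLMF-BP}. The lower level~\eqref{eq:SLMF-pessimistic-lower-tri} is separable in $k$: its objective is a sum over $k$ and its constraints decouple. Hence
\begin{equation*}
  S_2\bigl(x,(\hat y^k)_k\bigr)=\prod_{k\in[m]} S(x,\hat y^k).
\end{equation*}
Under the constraints $\hat y^k=y^k\in Y$, the inclusion~\eqref{eq:SLMF-pessimistic-middle-lower} therefore becomes $y^k\in S(x,y^k)$ for every $k$. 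By Lemma~\ref{lem:fixed-point-GNEP}, this is equivalent to $y^k\in\GNEP(x)$ for every $k$.

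Consequently, the feasible set of~\eqref{eq:SLMF-pessimistic-middle-tri} projects bijectively onto the feasible set of~\eqref{eq:Hypographical-middle}, and the two objectives agree. So $S_1(x)$ consists exactly of the points $(y^k,y^k,r_k)_k$ with $(r_k,y^k)_k$ solving~\eqref{eq:Hypographical-middle}. Since that problem decouples in $k$, these are exactly the points for which each $(r_k,y^k)$ solves~\eqref{eq:single-hypographical-problem-truncated}. Those solutions are characterized by
\begin{equation*}
  y^k\in\argmax_{\GNEP(x)}\psi_k(x,\cdot), \qquad r_k=\psi_k(x,y^k).
\end{equation*}
If $x\notin D$, then $\GNEP(x)=\emptyset$, so $S_1(x)=\emptyset$ and $x$ is infeasible for~\eqref{eq:SLMF-pessimistic-upper-tri}. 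Thus the trilevel problem implicitly enforces $x\in D$.

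\textbf{Step 2: feasibility equivalence.} Let $x\in X\cap D$. Hypothesis~\eqref{hip:attainability} gives $S_1(x)\neq\emptyset$, and every element of $S_1(x)$ has $r_k=\max_{y\in\GNEP(x)}\min\{G_k(x,y),1\}$. I then claim that the following are equivalent:
\begin{equation*}
  G_k(x,y)\le 0\ \ \forall y\in\GNEP(x)
  \iff
  \max_{y\in\GNEP(x)}\min\{G_k(x,y),1\}\le 0 .
\end{equation*}
The reason is that $\min\{t,1\}\le 0$ holds if and only if $t\le 0$.

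It follows that $x$ is feasible for~\eqref{eq:Pessimistic-SLMF-problem} if and only if $x\in X\cap D$ and there is an element of $S_1(x)$ with $r_k\le 0$ for all $k$. In that case, in fact every element of $S_1(x)$ has $r_k\le0$. This is precisely feasibility of $x$, together with some lifted variables, for~\eqref{eq:SLMF-pessimistic-upper-tri}.

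\textbf{Step 3: conclusion and the delicate point.} Since the objectives coincide and depend only on $x$, the projections of the feasible sets onto $x$ agree, and the equivalence of solutions follows. The main obstacle is the role of~\eqref{eq:SLMF-pessimistic-upper-r} in the optimistic trilevel setting. One must make sure that the leader cannot choose a favorable $r_k$. This is handled because $r_k$ is pinned down as the optimal value of the middle level: the value is unique even if the argmax is not. That uniqueness is where~\eqref{hip:attainability} is used.

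Without attainment, $S_1(x)$ could be empty although $x$ is feasible for the pessimistic problem, for instance when the supremum equals $0$ but is not attained. The truncation at $1$ guarantees that the supremum is finite, so no separate treatment of unbounded $G_k$ is needed.
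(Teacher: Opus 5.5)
Your proof is correct and follows essentially the same route as the paper. Both use Lemma~\ref{lem:fixed-point-GNEP} plus separability in $k$ to reduce the middle level to $m$ independent truncated hypograph problems over $\GNEP(x)$, pin down $r_k=\max_{\GNEP(x)}\psi_k(x,\cdot)$ at middle-level optimality, use $\min\{t,1\}\le 0\iff t\le 0$, and invoke \eqref{hip:attainability} to build a middle-level solution for pessimistic-feasible $x$; your explicit remark that $S_1(x)\neq\emptyset$ forces $x\in D$ makes precise a point the paper leaves implicit. One small correction to your Step~3 commentary: the value $r_k$ is the same for every element of $S_1(x)$ automatically, so \eqref{hip:attainability} is used only to guarantee $S_1(x)\neq\emptyset$, which is exactly how you use it in Step~2.
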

\begin{proof}
  The upper-level objective function is $\theta(x)$ in both
  formulations, so it is enough to prove the equivalence of their
  feasible leader decisions.

  Suppose first that $\big(x,(y^k,\hat{y}^k,r_k)_{k\in[m]}\big)$
  is feasible
  for~\eqref{eq:SLMF-pessimistic-upper-tri}--\eqref{eq:SLMF-pessimistic-lower-tri}.
  By~\eqref{eq:SLMF-pessimistic-middle-coupling},
  \eqref{eq:SLMF-pessimistic-middle-lower}, and the independence
  of the copied variables, Lemma~\ref{lem:fixed-point-GNEP} gives
  $y^k\in \GNEP(x)$ for each $k\in[m]$.
  By~\eqref{eq:SLMF-pessimistic-middle-coupling},
  \eqref{eq:SLMF-pessimistic-middle-lower}, and
  Lemma~\ref{lem:fixed-point-GNEP}, the feasible set of the
  middle-level problem is the Cartesian product, over $k\in[m]$, of
  the feasible sets of the independent problems
  \begin{equation*}
    \max_{r_k,y^k} \quad r_k \quad \text{s.t.} \quad
    r_k \leq G_k(x,y^k),\, r_k\leq 1, \, y^k\in\GNEP(x).
  \end{equation*}
  Since minimizing the middle-level objective
  $-\sum_{k=1}^m r_k$ is equivalent to maximizing
  $\sum_{k=1}^m r_k$, and the component problems are independent,
  middle-level optimality implies
  \begin{equation*}
    r_k= \max_{z\in \GNEP(x)} \psi_k(x,z)
    \quad
    \forall k\in[m].
  \end{equation*}
  Note that $\psi_k(x,y)\leq 0$ implies $G_k(x,y)\leq 0$.
  Thus, together with~\eqref{eq:SLMF-pessimistic-upper-r}, the above
  computation of $r_k$ yields $G_k(x,y)\leq0$, for every $y\in\GNEP(x)$
  and every $k\in[m]$.
  Therefore, $x$ is feasible
  for~\eqref{eq:Pessimistic-SLMF-problem}.

  Conversely, let $x$ be feasible
  for~\eqref{eq:Pessimistic-SLMF-problem}.
  By hypothesis~\eqref{hip:attainability}, for every $k\in[m]$
  there exists $y^k\in \GNEP(x)$ such that
  \begin{equation*}
    \max_{z\in \GNEP(x)}\psi_k(x,z) = G_k(x,y^k)\leq0.
  \end{equation*}
  For each $k\in [m]$, set
  $
  r_k \define \psi_k(x,y^k)$ and
  $\hat{y}^k\define y^k$.
  By Lemma~\ref{lem:fixed-point-GNEP}, each $y^k$ solves a
  corresponding copy of the lower-level
  problem~\eqref{eq:BP-lower-problem}, with $(x,\hat{y}) =
  (x,\hat{y}^k)$. Since these copies are independent, $(y^k)_{k\in
    [m]}$ solves~\eqref{eq:SLMF-pessimistic-lower-tri}, and so
  $(y^k,\hat{y}^k,r_k)_{k\in[m]}$ is feasible
  for~\eqref{eq:SLMF-pessimistic-middle-tri}. Moreover,
  \begin{equation*}
    -\sum_{k=1}^mr_k
    =
    -\sum_{k=1}^m\max_{z\in \GNEP(x)}\psi_k(x,z),
  \end{equation*}
  which is a lower bound for the objective value of every feasible
  point of the middle-level problem
  by~\eqref{Constraint:ri-less-maxGi} and~\eqref{Constraint:ri-truncation}. Therefore,
  $(y^k,\hat{y}^k,r_k)_{k\in[m]} \in S_1(x)$.
  Since $r_k\leq0$ holds for every $k\in [m]$, the resulting point is
  feasible
  for~\eqref{eq:SLMF-pessimistic-upper-tri}--\eqref{eq:SLMF-pessimistic-lower-tri}.
\end{proof}

\begin{remark}
  \label{rem:pessimistic-convexity}
  Under the assumptions of
  Theorem~\ref{thm:Reduction-PessSLMF-3Level}, suppose that the
  pessimistic SLMF problem has convex data.
  Assume additionally that $G_k(x,\cdot)$ is concave for every $k\in[m]$
  and every $x\in X$. Then, the trilevel reformulation in
  Theorem~\ref{thm:Reduction-PessSLMF-3Level} has convex data.
  Recall that convexity here is understood with the variables of preceding
  levels fixed, as in the definition given after
  Theorem~\ref{thm:ThreeLevelToPessimistic}.
\end{remark}

\begin{remark}
  Of course, the constant $1$ in the definition of $\psi_k$ can be
  replaced with any constant $c>0$.
\end{remark}

The attainability hypothesis \eqref{hip:attainability} is verified for
example, whenever $\GNEP(\cdot)$ has compact values and the functions
$G_k(x,\cdot)$ are continuous, for every $x\in X\cap D$. But also it is
trivially verified whenever $\sup\{G_k(x,y)\,:\, y\in \GNEP(x)\} >
1$. In general, the problem is then reduced to the case when
\begin{equation*}
  \Defset{ r\in \R}{ \exists y\in \GNEP(x),\, r=G_k(x,y) } \subset ]-\infty,1],
\end{equation*}
on which we need to verify that $G_k(x,\cdot)$ attains its maximum
directly. The truncation technique reduces the verification of the
attainability hypothesis~\eqref{hip:attainability} to cases in which
$G_k(x,\cdot)$ is bounded above on $\GNEP(x)$. Under linear data,
every such finite supremum is attained, as shown next.

\begin{lemma}\label{lemma:Attainability-with-linearData}
  Suppose that Problem~\eqref{eq:Pessimistic-SLMF-problem} has linear
  data. Then, the hypothesis~\eqref{hip:attainability} is verified.
\end{lemma}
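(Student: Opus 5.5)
The plan is to show that, under linear data, $\GNEP(x)$ is a finite union of polyhedra for each fixed $x\in X\cap D$. A linear function bounded above on a polyhedron attains its supremum there, and the truncation $\psi_k$ then takes care of the unbounded case.

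\emph{Step 1.} Fix $x\in X\cap D$. For each follower $i$, the problem $\min\{c_i^\top y_i : B_iy_i\le b_i-A_ix-C_iy_{-i}\}$ is a linear program; if $Y_i$ is a polyhedron, its inequalities are absorbed into these constraints. By LP duality and complementary slackness, $y_i$ solves it if and only if it is primal feasible and there is $\lambda_i\ge0$ with $B_i^\top\lambda_i=-c_i$ and $\lambda_i^\top(b_i-A_ix-B_iy_i-C_iy_{-i})=0$. This is the KKT system, which is exact for LPs.

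For each choice of index sets $J_i$ (the constraints allowed to be slack, whose multipliers are forced to zero), the resulting system in $(y,\lambda)$ is linear. Its feasible set $P_J$ is therefore a polyhedron, and the set of equilibria is exactly the projection onto $y$ of $\bigcup_J P_J$. Projections of polyhedra are polyhedra, so $\GNEP(x)=\bigcup_{J} Q_J$ is a finite union of polyhedra $Q_J$.

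\emph{Step 2.} Fix $k$. The function $G_k(x,\cdot)=T_kx+W_k y-h_k$ is affine in $y$. There are two cases.

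\begin{itemize}
\item If $\sup\{G_k(x,y): y\in\GNEP(x)\}>1$, there is an equilibrium $y$ with $\psi_k(x,y)=1$. Since $\psi_k\le1$ everywhere, its maximum is attained at $y$.
\item Otherwise $G_k(x,\cdot)$ is bounded above on every nonempty $Q_J$. By the fundamental theorem of linear programming, its supremum on each such $Q_J$ is attained. Taking the best of the finitely many nonempty $Q_J$, which exist because $x\in D$, gives a maximizer $y^*$ of $G_k(x,\cdot)$ over $\GNEP(x)$. Since $t\mapsto\min\{t,1\}$ is nondecreasing, $y^*$ also maximizes $\psi_k(x,\cdot)$.
\end{itemize}

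In either case \eqref{hip:attainability} holds.

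The main point needing care is Step~1, namely that the equilibrium set is a finite union of polyhedra even though the constraints of each player depend on the others' strategies. The coupling enters only through the right-hand sides $b_i-A_ix-C_iy_{-i}$, and these are linear in $y$. Once the active sets $J_i$ are fixed, the complementarity conditions become the linear equalities $(\lambda_i)_j=0$ for $j\in J_i$ and equality of the $j$th primal constraint for $j\notin J_i$. The joint system in $(y,\lambda)$ is therefore linear, and no bilinear terms remain.
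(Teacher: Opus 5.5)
Your proposal is correct and follows essentially the same route as the paper's proof. Like the paper, you write $\GNEP(x)$ as a finite union of polyhedra by resolving the complementarity conditions in the followers' LP optimality systems and projecting out the multipliers. You then split into the cases $\sup_{y\in\GNEP(x)} G_k(x,y)>1$, where the truncation makes attainment immediate, and $\leq 1$, where the bounded linear function attains its maximum on each polyhedron and you take the best of finitely many maximizers. Your explicit active-set argument, showing that the coupling through $y_{-i}$ enters only linearly, is simply a more detailed version of the paper's one-line justification.
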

\begin{proof}
  Fix $x\in X\cap D$ and $k\in [m]$. Since $x\in D$, $\GNEP(x)$
  is nonempty.  Let $\alpha = \sup\{ G_k(x,y)\,:\, y\in\GNEP(x)\}$
  which is either finite or $+\infty$. If $\alpha>1$, there must
  exist $\bar{y}\in \GNEP(x)$ such that $G_k(x,\bar{y})>1$ and so
  $\psi_k(x,\cdot)$ attains its maximum at $\bar{y}$. Thus, we only
  need to study the case in which $\alpha\leq 1$ holds.

  Since the equilibrium problem~\eqref{eq:SLMF-lower-problem} is fully
  linear, $\GNEP(x)$ can be written as a finite union of
  nonempty polyhedra $P_1,\ldots, P_s$.
  Indeed, for fixed $x$, the linear-program optimality conditions
  of all followers form a system of linear constraints and
  complementarity conditions. Resolving each complementarity
  condition into its two zero alternatives gives a finite union
  of polyhedra in the strategy and multiplier variables.
  Projecting out the multipliers yields the claimed representation
  of $\GNEP(x)$.
  Let $t\in [s]$. Since $G_k(x,\cdot)$ is
  linear and
  \begin{equation*}
    \sup \Defset{ G_k(x,y)}{ y \in P_t }\leq \alpha <+\infty,
  \end{equation*}
  we get that there must exist $y_t\in P_t$ such that
  $G_k(x,\cdot)$ attains its maximum on $P_t$ at $y_t$. Then, by
  choosing
  \begin{equation*}
    \bar{y} \in \argmax \Defset{ G_k(x,y)}{ y\in \{y_t\,:\, t=1,\dots,s\} },
  \end{equation*}
  we get that $G_k(x,\bar{y}) = \alpha$. Thus, $\psi_k(x,\cdot)$
  attains its maximum on $\GNEP(x)$ at $\bar{y}$, which completes the
  proof.
\end{proof}

Using the lemma above and following the same vein of arguments as in
Proposition~\ref{prop:optimistic-reformulation-structure}, we can
directly formulate the following proposition.

\begin{proposition}\label{prop:pessimisticTotrivlevel-reformulation-structure}
  Suppose that Problem~\eqref{eq:Pessimistic-SLMF-problem} has linear
  data. Then, the hypothesis of
  Theorem~\ref{thm:Reduction-PessSLMF-3Level} holds, and the trilevel
  formulation~\eqref{eq:SLMF-pessimistic-upper-tri}--\eqref{eq:SLMF-pessimistic-lower-tri}
  has linear data.

  Moreover, under the standard explicit rational encoding of the linear data,
  the trilevel reformulation can be constructed in polynomial time and its
  encoding length is polynomially bounded by the encoding length of
  the original pessimistic SLMF problem.
\end{proposition}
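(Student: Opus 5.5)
The proof splits into three parts: the attainability hypothesis, linearity of every level of the reformulation, and the polynomial size bound.

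\emph{Attainability.} Lemma~\ref{lemma:Attainability-with-linearData} gives hypothesis~\eqref{hip:attainability} directly, so Theorem~\ref{thm:Reduction-PessSLMF-3Level} applies. Nothing further is needed here.

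\emph{Linearity, level by level.} We check linearity in the sense of the Remark on multilevel linear data, with the variables of the preceding levels fixed.
\begin{enumerate}
\item[(a)] The upper level~\eqref{eq:SLMF-pessimistic-upper-tri} has objective $\theta(x)=d^\top x$ and constraints $x\in X$ (a polyhedron) and $r_k\leq 0$. All of these are linear.
\item[(b)] In the middle level~\eqref{eq:SLMF-pessimistic-middle-tri}, the objective $-\sum_k r_k$ is linear. Since $G_k(x,y)=T_kx+W_ky-h_k$, where $T_k,W_k,h_k$ are the $k$th rows of $T,W,h$, constraint~\eqref{Constraint:ri-less-maxGi} reads $r_k-T_kx-W_ky^k+h_k\leq 0$, which is linear. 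The truncation $r_k\leq 1$, the polyhedral restriction $\hat y^k\in Y$, and the coupling $\hat y^k=y^k$ are linear as well.
\item[(c)] The lower level~\eqref{eq:SLMF-pessimistic-lower-tri} is treated exactly as in the proof of Proposition~\ref{prop:optimistic-reformulation-structure}, applied to each copy $k$. By Definition~\ref{def:FullyLinear}, the objective becomes $\sum_k\sum_i c_i^\top y_i^k$. The feasibility constraints become $A_ix+B_iy_i^k+C_i\hat y_{-i}^k\leq b_i$, together with $y_i^k\in Y_i$. The improvement constraints become $c_i^\top y_i^k\leq c_i^\top\hat y_i^k$. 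All of these are linear in the variables of all levels.
\end{enumerate}

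\emph{Encoding length.} The reformulation uses $m$ copies $y^k$ and $m$ copies $\hat y^k$ of the followers' vector, plus $m$ scalars $r_k$. Each copy of the lower level repeats the followers' constraint data once and adds $N$ improvement inequalities. The middle level adds $O(m)$ rows taken from $(T,W,h)$, the $m$ bounds $r_k\leq1$, $m$ copies of the description of $Y$, and $mq$ coupling equalities.

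Writing $L$ for the input length, the number of variables and constraints is therefore $O(m\cdot(\text{size of the original instance}))$. Since $m$ is at most the number of rows of $T$, this is $O(L^2)$. Every coefficient is copied from the input or is $0$ or $\pm1$, so no arithmetic on the rational data is required. Construction time and encoding length are thus polynomially bounded.

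\emph{Main obstacle.} The only delicate point is the size bound. The number $m$ of copies must be counted against the input, and it is, since every coupling constraint appears explicitly in the encoding. No step requires writing out the possibly exponential polyhedral decomposition of $\GNEP(x)$ used in Lemma~\ref{lemma:Attainability-with-linearData}; that decomposition is needed only for the existence argument, not for the construction.
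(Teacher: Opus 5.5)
Your proposal is correct and follows essentially the same route as the paper. The paper obtains hypothesis~\eqref{hip:attainability} from Lemma~\ref{lemma:Attainability-with-linearData} and then gets linearity and the polynomial size bound ``in the same vein'' as Proposition~\ref{prop:optimistic-reformulation-structure}, applied to the $m$ independent copies; your level-by-level check and explicit count of the copies, including the point that the polyhedral decomposition of $\GNEP(x)$ is never constructed, simply make that sketched argument explicit.
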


\section{Complexity Analysis under Rational Linear Data}
\label{sec:complexity-under-linear-data}

If we restrict our attention to rational linear data, then it is possible to
perform complexity analysis, following the classic results from
\textcite{Jeroslow1985:Polynomial} and more recently from
\textcite{BUchheim2023:BilevelinNP} and \textcite{Sugishita-Carvalho:2026}.

Recall that for an optimization problem of the form $\min\{ f(x)\,:\,
x\in X \}$, the associated decision problem is, for a given value
$t\in \Q$, to decide whether there exists $x\in X$ such that $f(x)\leq
t$.

\begin{theorem}\label{thm:completexity-rationallineardata}
  The decision problem associated to  optimistic SLMF problem with
  rational linear data is NP-complete, while the decision problem associated to
  pessimistic SLMF problem with rational linear data is $\Sigma_2^p$-complete.
\end{theorem}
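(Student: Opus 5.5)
The plan is to prove each of the two completeness claims by one membership argument and one hardness argument. Each argument will chain a reformulation from Sections~\ref{sec:optimistic}--\ref{sec:pessimistic-to-three-level} with a known complexity result for multilevel programs with rational linear data. Everything hinges on the fact, recorded in Propositions~\ref{prop:optimistic-reformulation-structure}, \ref{prop:trivlevelTopessimistic-reformulation-structure} and~\ref{prop:pessimisticTotrivlevel-reformulation-structure}, that these reformulations:
\begin{itemize}
\item preserve linear data,
\item are computable in polynomial time with polynomially bounded encoding length, and
\item preserve optimal values.
\end{itemize}
The last property holds because the leader's objective is unchanged and the feasible leader decisions coincide. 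Hence a yes-instance $(\text{problem},t)$ of one decision problem is mapped to a yes-instance with the same threshold $t$ of the other, which gives a polynomial many-one (Karp) reduction in each direction.

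For the optimistic case, membership in NP will follow from Theorem~\ref{thm:Reduction-SLMF-BP} together with Proposition~\ref{prop:optimistic-reformulation-structure}. These map an optimistic SLMF instance to an optimistic linear bilevel instance, whose decision problem lies in NP by \textcite{BUchheim2023:BilevelinNP}. One point to check is that "exists a feasible point with value $\le t$" transfers. It does, because feasible $(x,y)$ of \eqref{eq:Optimistic-SLMF-problem} correspond bijectively to feasible $(x,y,y)$ of \eqref{eq:BP-upper-problem} with the same objective value. No solvability assumption is needed for the decision version. NP-hardness is immediate: an optimistic linear bilevel problem is an SLMF problem with $N=1$, and its decision problem is NP-hard by \textcite{Jeroslow1985:Polynomial}.

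For the pessimistic case, membership in $\Sigma_2^p$ will use Lemma~\ref{lemma:Attainability-with-linearData}, which ensures hypothesis~\eqref{hip:attainability}. Theorem~\ref{thm:Reduction-PessSLMF-3Level} and Proposition~\ref{prop:pessimisticTotrivlevel-reformulation-structure} then give a polynomial reduction to an optimistic trilevel problem with rational linear data. Its decision version lies in $\Sigma_2^p$ by \textcite{Jeroslow1985:Polynomial,Sugishita-Carvalho:2026}. Again the feasible leader decisions coincide, so the existence of a feasible $x$ with $\theta(x)\le t$ is preserved.

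For $\Sigma_2^p$-hardness I will start from the optimistic linear trilevel decision problem, which is $\Sigma_2^p$-hard by the same references. I will apply Theorem~\ref{thm:ThreeLevelToPessimistic} with Proposition~\ref{prop:trivlevelTopessimistic-reformulation-structure}, which gives a pessimistic SLMF problem with two followers and linear data. The feasible sets correspond via $(x,y,z)\mapsto(x,\hat y,\hat z)$ with identical objective $F_1$, so thresholds are preserved.

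The main obstacle is making sure the cited complexity results match the exact problem classes produced by the reformulations. The trilevel reformulation in Theorem~\ref{thm:Reduction-PessSLMF-3Level} has coupling constraints at the middle level (the constraints $r_k\le G_k(x,y^k)$ and $\hat y^k=y^k$). Likewise, \eqref{eq:Tri-SLMF-upper} contains upper-level constraints on $\hat y,\hat z$ and the implicit domain condition $(x,\hat y)\in D_W$. I would first verify that the membership result of \textcite{Sugishita-Carvalho:2026} covers trilevel programs with such coupling constraints and unbounded feasible regions. If it does not, I would argue membership directly. The certificate for the existential level is a leader decision $x$ together with the middle-level data, of polynomial size by the finite-union-of-polyhedra structure used in Lemma~\ref{lemma:Attainability-with-linearData}. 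The universal level then checks $G_k(x,y)\le0$ for all $y\in\GNEP(x)$, and this check is a coNP question because $\GNEP(x)$ is a finite union of polyhedra with polynomially encodable vertices. Similarly, for the hardness direction I would confirm that the hard trilevel instances satisfy the standing assumptions, or that any missing assumptions are harmless. In particular, $\hat y\in Y(x)$ must be a linear constraint, which holds when $Y(x)$ is described by linear inequalities in $(x,y)$.
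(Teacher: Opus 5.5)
Your proposal is correct and follows the paper's proof. Optimistic case: NP-hardness via the single-follower special case, and NP-membership via Theorem~\ref{thm:Reduction-SLMF-BP} with Proposition~\ref{prop:optimistic-reformulation-structure}; pessimistic case: $\Sigma_2^p$-completeness by transporting the trilevel result in both directions through Propositions~\ref{prop:trivlevelTopessimistic-reformulation-structure} and~\ref{prop:pessimisticTotrivlevel-reformulation-structure}, using that feasible leader decisions and objective values, and hence thresholds, are preserved. Your question of whether the cited trilevel membership result covers the middle-level coupling constraints is a fair scope caveat that the paper does not raise; in the fallback, Lemma~\ref{lemma:Attainability-with-linearData} only describes $\GNEP(x)$ for fixed $x$, so the polynomial encoding size of the existential certificate $x$ itself would need a separate argument, e.g., via the polyhedral structure of the graph of $\GNEP$.
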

\begin{proof}
  Clearly, on the one hand, linear bilevel optimization is a subclass
  of optimistic SLMF problems with linear data.
  This yields that the decision problem of the latter class is
  NP-hard; see \textcite{Jeroslow1985:Polynomial}.
  On the other hand, Theorem~\ref{thm:Reduction-SLMF-BP} and
  Proposition~\ref{prop:optimistic-reformulation-structure}
  provide a polynomial-time reduction from optimistic SLMF
  problems with rational linear data to rational linear bilevel
  optimization problems, preserving feasible objective values.
  Thus, the decision version of the optimistic SLMF problem with
  linear data is NP-complete as well, since the decision version of
  rational linear bilevel optimization is NP-complete; see
  \textcite{Jeroslow1985:Polynomial} and
  \textcite{BUchheim2023:BilevelinNP}.

  If one considers trilevel optimization problems with rational linear
  data, their decision version is $\Sigma_2^p$-complete:
  $\Sigma_2^p$-hardness follows from
  \textcite{Jeroslow1985:Polynomial}, while $\Sigma_2^p$-completeness
  follows from \textcite{Sugishita-Carvalho:2026}.

  Then
  Proposition~\ref{prop:trivlevelTopessimistic-reformulation-structure}
  and
  Proposition~\ref{prop:pessimisticTotrivlevel-reformulation-structure},
  yield directly that the decision version of the pessimistic SLMF
  problem with rational linear data is $\Sigma_2^p$-complete.
\end{proof}

In this discussion, we did not analyze pessimistic SLMF problems
without coupling constraints.
In general, such a problem has the form of
\begin{equation}
  \label{eq:Pessimistic-SLMF-problem-NoCoupling}
  \min_{x\in X\cap D} \sup_{y\in \GNEP(x)} F(x,y).
\end{equation}
The supremum is understood in the extended-real sense and may
equal $+\infty$. For a threshold $\tau\in\Q$, the associated
decision problem asks whether there exists $x\in X\cap D$ such
that $F(x,y)\leq\tau$ for every $y\in\GNEP(x)$.
Using the epigraphical reformulation of
\textcite{Wiesemann:2013:Pessimistic}, this is equivalent to asking
whether the following problem has a feasible point with
$t\leq\tau$:
\begin{subequations}
  \label{eq:Pessimistic-SLMF-epi}
  \begin{align}
    \min_{(x,t)}\quad
    &t\label{eq:Pessimistic-SLMF-epi-objective}\\
    \text{s.t.}\quad
    &x\in X\cap D, t\in\R,\label{eq:epi-domain}\\
    &F(x,y) - t\leq0
      \quad \forall y\in \GNEP(x).\label{eq:Epi-robust}
  \end{align}
\end{subequations}

Under rational linear data, the epigraphical reformulation
\eqref{eq:Pessimistic-SLMF-epi} is a polynomial-time reduction to
the decision problem in Theorem~\ref{thm:completexity-rationallineardata}.
Consequently, the decision version of
\eqref{eq:Pessimistic-SLMF-problem-NoCoupling} belongs to
$\Sigma_2^p$ as a particular subclass of pessimistic SLMF problems.

However, this does not settle its hardness. It is NP-hard,
since it contains pessimistic linear bilevel optimization
without coupling constraints as a special case.
To establish $\Sigma_2^p$-hardness of
Problem~\eqref{eq:Pessimistic-SLMF-problem-NoCoupling}, a first idea is
to reduce problem~\eqref{eq:Pessimistic-SLMF-problem} to a pessimistic SLMF
problem without coupling constraints using exact penalization, in the
vein of
\textcite{LefebvreSchmidt2025:Exact,HenkeEtAl:2025:OnCoupling,HenkeEtAl:2026:CouplingPessimistic}. The
idea would be to find, under linear data, a factor $\rho>0$ large
enough such that
\begin{equation*}
  \min_{x\in D\cap X} \theta(x) +\rho \sup_{y\in \GNEP(x)} \sum_{k=1}^m [G_k(x,y)]_+
\end{equation*}
has the same solutions as~\eqref{eq:Pessimistic-SLMF-problem}, where
$[a]_+\define \max\{0,a\}$, and then modify the equilibrium problem to
replace $[G_k(x,y)]_+$ by slack variables, as in
\textcite{HenkeEtAl:2025:OnCoupling,HenkeEtAl:2026:CouplingPessimistic}. The
second step is easy to do. However, the penalization step is not
always possible as the following example shows.

\begin{example}\label{ex:PenalizationDoesNotWork}
  Consider the following GNEP of four players parameterized by
  \mbox{$x\in X = [0,1]^2$}:
  \begin{subequations}
    \begin{align}
      & \min_{a\in \R} \quad a \quad \text{s.t.} \quad a \geq b+x_1-1,
        \,a \geq b-x_1,
        \label{eq:aux-p1}\tag{P$_1$}\\
      & \min_{b\in \R} \quad b \quad \text{s.t.} \quad b\geq a,
        \label{eq:aux-p2}\tag{P$_2$}\\
      & \min_{c\in \R} \quad c \quad \text{s.t.} \quad 0\leq c \leq 1,
        \,c \geq d-x_2,
        \label{eq:aux-p3}\tag{P$_3$}\\
      & \min_{d\in \R} \quad d \quad \text{s.t.} \quad 0\leq d\leq 1, \, d\geq c.
        \label{eq:aux-p4}\tag{P$_4$}
    \end{align}
  \end{subequations}

  Note first that at equilibrium between~\eqref{eq:aux-p1}
  and~\eqref{eq:aux-p2}, we have that
  \begin{equation*}
    a = b+\max\{x_1-1,-x_1\}\quad\text{ and }\quad b=a.
  \end{equation*}
  This entails that $\max\{x_1-1,-x_1\} = 0$ and so
  $x_1\in\{0,1\}$. The equilibrium between~\eqref{eq:aux-p3}
  and~\eqref{eq:aux-p4} holds when
  \begin{equation*}
    c = \max\{0,d-x_2\}\quad \text{ and }\quad d = c.
  \end{equation*}
  Thus, $c = \max\{0,c-x_2\}$. If $x_2 = 0$, then we can choose
  $c=d\in [0,1]$ and satisfy the latter equation. On the contrary, if
  $x_2 > 0$, then the only value that verifies the equations is
  $c=d=0$. Thus, we deduce that
  \(D = \dom (\GNEP) = \{0,1\}\times [0,1]\) and that for $x\in D$,
  \begin{equation*}
    \GNEP(x) = \begin{cases}
      \{(a,a,0,0)\,:\, a\in \R\}, &\text{if } x_2>0,\\
      \{(a,a,c,c)\,:\, a\in \R, c\in [0,1]\}, &\text{if } x_2=0.\\
    \end{cases}
  \end{equation*}
  Now, writing $y=(a,b,c,d)$, consider the robust constraint
  \begin{equation*}
    G(x,y)  = x_2 - x_1 + c \leq 0 \quad \forall y\in \GNEP(x),
  \end{equation*}
  and the objective function $\theta(x) = x_1$. Then, the pessimistic
  SLMF problem~\eqref{eq:Pessimistic-SLMF-problem} is given by
  \begin{equation}
    \label{eq:PessimisticProblem-nonpenalizable}
    \begin{split}
      \min_{x} \quad & x_1 \\
      \text{s.t.} \quad & x \in \{0,1\}\times [0,1],\\
      & x_2-x_1 + c\leq 0 \quad \forall c\in C(x),
    \end{split}
  \end{equation}
  where $C(x)$ is the projection to the third coordinate of $\GNEP(x)$,
  that is, $C(x) = \{0\}$ if $x_2>0$ and $C(x) = [0,1]$ if $x_2 = 0$.
  Note that the optimal value for this problem is $\theta^*=1$.
  Indeed, if $x_1=0$, the robust constraint is violated:
  for $x_2>0$, choose $c=0$, and for $x_2=0$, choose $c=1$.
  Now, take any $\rho>0$ and consider the penalized problem
  \begin{equation}
    \label{eq:aux-p-rho}
    \tag{P$_\rho$}
    \min_{x\in \{0,1\}\times [0,1]} v(x) \define x_1
    + \rho\max_{c\in C(x)} [x_2-x_1 + c]_+,
  \end{equation}
  where $[x_2-x_1 + c]_+ = \max\{ 0,x_2-x_1 + c\}$. On the one hand,
  in the branch $\{1\}\times [ 0,1]$, the robust constraint of
  \eqref{eq:PessimisticProblem-nonpenalizable} is verified, and so
  $v(x) = 1$. On the other hand, in the branch $\{0\}\times [0,1]$, we
  get that
  \begin{equation*}
    v(x) =
    \begin{cases}
      \rho x_2, &\text{if }x_2>0,\\
      \rho, &\text{if }x_2 = 0.
    \end{cases}
  \end{equation*}
  Thus, by taking $x_2\to 0$, we deduce that the infimum value of $v$
  is 0, but it is not attained. Thus, regardless of the value of $\rho$,
  Problem~\eqref{eq:aux-p-rho} does not admit solutions and its infimum
  value is different from the optimal value of
  \eqref{eq:PessimisticProblem-nonpenalizable}.\hfill$\Diamond$
\end{example}

Even though it is not possible to
reduce~\eqref{eq:Pessimistic-SLMF-problem}
to~\eqref{eq:Pessimistic-SLMF-problem-NoCoupling} via direct
penalization of the robust constraints, we still can show that the
decision version of pessimistic SLMF problems with rational linear
data and no coupling constraints is $\Sigma_2^p$-hard. We reduce
the DeNegre variant of the bilevel knapsack problem introduced in
\textcite{DeNegre:2011}, which is known to be $\Sigma_2^p$-complete;
see \textcite{Caprara-et-al:2014:Knapsack}. For positive integers
$A,B, a_1,\ldots,a_n,b_1,\ldots,b_n$, the DeNegre formulation is given
by
\begin{subequations}\label{eq:DNeg-upper}
  \begin{align}
    \min_{x,y}\quad &\sum_{k=1}^n b_ky_k\\
    \text{s.t.}\quad &\sum_{k=1}^n a_kx_k \leq A, \label{eq:KnapsackLeader}\\
                    &x \in \{0,1\}^n,\label{eq:IntegralityLeader}\\
                    &y \in S(x),
  \end{align}
\end{subequations}
where $S(x)$ is the solution set of
\begin{subequations}\label{eq:DNeg-lower}
  \begin{align}
    \max_{y}\quad &\sum_{k=1}^n b_ky_k\\
    \text{s.t.}\quad &\sum_{k=1}^n b_ky_k\leq B, \label{eq:KnapsackFollower}\\
                  &y_k \leq 1- x_k \quad\forall k\in[n],\label{eq:InterdictionFollower}\\
                  &y \in \{0,1\}^n.\label{eq:IntegralityFollower}
  \end{align}
\end{subequations}
In this problem, each player has their private knapsacks: the leader
picks some items to pack and then the follower chooses from the
remaining items. The leader aims to minimize the gain of the follower.

\begin{theorem}\label{thm:complexity-nocoupling}
  The decision problem associated to pessimistic SLMF problem with
  rational linear data and no coupling constraints is
  $\Sigma_2^p$-complete.
\end{theorem}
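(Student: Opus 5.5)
The statement has two parts: membership in $\Sigma_2^p$ and $\Sigma_2^p$-hardness. Membership was already argued just before the statement. The epigraphical reformulation~\eqref{eq:Pessimistic-SLMF-epi} is a polynomial-time reduction of~\eqref{eq:Pessimistic-SLMF-problem-NoCoupling} to a pessimistic SLMF problem with rational linear data, which lies in $\Sigma_2^p$ by Theorem~\ref{thm:completexity-rationallineardata}. So the work is the hardness part. I would reduce the DeNegre problem~\eqref{eq:DNeg-upper}--\eqref{eq:DNeg-lower} to an instance of~\eqref{eq:Pessimistic-SLMF-problem-NoCoupling} with only two followers and fully linear data.

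The key observation is about the pessimistic value. If $\GNEP(x)$ projects, in the $y$-variables, exactly onto the follower's feasible set
\[
\Bigl\{y\in\{0,1\}^n:\ \textstyle\sum_k b_ky_k\le B,\ y\le 1-x\Bigr\},
\]
then $\sup_{y\in\GNEP(x)} b^\top y$ is the follower's optimal value. Every optimal $y\in S(x)$ gives this same leader objective, so the DeNegre objective at $x$ equals the pessimistic value. Two things then remain. First, integrality of $y$ must be imposed through the equilibrium structure. Second, integrality of $x$ must be imposed through the domain constraint $x\in D$, using the mechanism of Example~\ref{ex:PenalizationDoesNotWork}. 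There, the pair (P$_1$)--(P$_2$) has an equilibrium only if $\max\{x_1-1,-x_1\}=0$, i.e.\ $x_1\in\{0,1\}$.

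I would build the following instance.
\begin{itemize}
\item The leader's set is the polyhedron $X=\{x\in[0,1]^n:\sum_k a_kx_k\le A\}$, and the leader's objective is $F(x,y)=\sum_k b_ky_k$.
\item Follower~1 controls auxiliary variables $(\alpha_k,\gamma_k)_{k\in[n]}$. Its objective is $\sum_k(\alpha_k+\gamma_k)$, and its constraints are
\[
\alpha_k\ge\beta_k+x_k-1,\quad \alpha_k\ge\beta_k-x_k,\quad \gamma_k\ge\delta_k+y_k-1,\quad \gamma_k\ge\delta_k-y_k .
\]
\item Follower~2 controls $(\beta_k,\delta_k)_{k\in[n]}$ and $y$. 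Its objective is $\sum_k(\beta_k+\delta_k)$, with $y$ entering with coefficient zero. Its constraints are
\[
\beta_k\ge\alpha_k,\quad \delta_k\ge\gamma_k,\quad 0\le y\le 1-x,\quad \textstyle\sum_kb_ky_k\le B .
\]
\end{itemize}
Placing $y$ with follower~2 rather than follower~1 is deliberate. Follower~1's constraints involve $y$ linearly, but follower~1 does not control $y$, so it cannot move $y$ to decrease $\alpha,\gamma$. Both followers' problems are separable across their blocks of variables, so best responses decompose coordinatewise. Equilibrium then forces $\alpha_k=\beta_k+\max\{x_k-1,-x_k\}$ together with $\beta_k=\alpha_k$, and likewise for $(\gamma_k,\delta_k)$ with $y_k$. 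This gives $x_k,y_k\in\{0,1\}$. Conversely, any binary $x\in X$ and any binary feasible $y$ yield equilibria with $\alpha=\beta$ and $\gamma=\delta$ arbitrary. Hence $D=X\cap\{0,1\}^n$, which is nonempty-valued because $y=0$ is always feasible. Moreover, $\GNEP(x)$ projects onto the follower's feasible set above.

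With this, $\min_{x\in X\cap D}\sup_{y\in\GNEP(x)}F(x,y)$ coincides with the optimal value of~\eqref{eq:DNeg-upper}, and the decision thresholds $\tau$ transfer unchanged. The construction is fully linear in the sense of Definition~\ref{def:FullyLinear}, has no coupling constraints, and has size linear in the DeNegre instance. The step I expect to need the most care is verifying $\GNEP(x)$ exactly, in particular that no spurious equilibria with fractional $x_k$ or $y_k$ exist. For each pair of players I would redo the case analysis of Example~\ref{ex:PenalizationDoesNotWork}. One also has to check that the free variables $\alpha,\beta,\gamma,\delta$ do not affect $F$, which holds since $F$ depends only on $y$.
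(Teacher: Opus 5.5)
Your proof is correct and follows the same overall reduction as the paper. Both arguments relax the leader's knapsack to the polytope $X=\{x\in[0,1]^n:\sum_k a_kx_k\le A\}$. Both then build a two-follower, fully linear GNEP whose equilibrium set is empty for non-binary $x$, and whose $y$-projection is exactly the DeNegre follower's feasible set otherwise. Finally, both use that the supremum of $\sum_k b_ky_k$ over all feasible $y$ equals the follower's optimal value.

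The difference is the integrality gadget. In the paper, the second player computes $u=\min\{1-y,y\}$ and $v=\min\{1-x,x\}$ as its unique best response, by maximizing $\sum_k u_k+\sum_k v_k$. The constraints $u\le0$ and $v\le0$ are placed in the first player's problem. These involve only the opponent's variables, so fractional points are excluded because the first player's feasible set becomes empty. You instead replicate the cyclic chase of Example~\ref{ex:PenalizationDoesNotWork} coordinatewise. Fractional $x_k$ or $y_k$ are then excluded because the best responses $\alpha_k=\beta_k+\max\{x_k-1,-x_k\}$ and $\beta_k=\alpha_k$ are incompatible. Meanwhile, every player's feasible set stays nonempty for all $x\in[0,1]^n$ and all opponent strategies.

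Each version has its advantage. The paper's is leaner: it uses $3n$ follower variables and has the finite equilibrium set $\{(y,0,0)\}$. Yours uses $5n$ variables and has an unbounded equilibrium set, which is harmless because $F$ depends only on $y$. In exchange, yours avoids constraints that restrict only opponents' decisions. It also shows that the same mechanism that defeats exact penalization in the example yields $\Sigma_2^p$-hardness.
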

\begin{proof}
  We only need to reduce the DeNegre bilevel knapsack problem to
  pessimistic SLMF problems with rational linear data.
  Let $A,B,a_1,\ldots,a_n,b_1,\ldots,b_n\in\N$ and consider the
  DeNegre bilevel knapsack problem
  \eqref{eq:DNeg-upper}--\eqref{eq:DNeg-lower}.

  Set $X = \{ x\in \R^n\,:\, x\in[0,1]^n, \sum_{k=1}^n a_kx_k \leq A
  \}$. Then, \eqref{eq:DNeg-upper}--\eqref{eq:DNeg-lower} can be
  written as
  \begin{equation}\label{eq:First-reformualtion}
    \min_{x\in X\cap \{0,1\}^n} \max_{y\in Y(x)} \sum_{k=1}^n b_ky_k,
  \end{equation}
  where $Y(x)$ is the feasible set of~\eqref{eq:DNeg-lower}. Now,
  consider the two-player GNEP given by the problems
  \begin{align*}
    \min_y \quad & 0\\
    \text{s.t.} \quad & \sum_{k=1}^n b_ky_k\leq B, y\leq 1-x,\\
                 &y\in [0,1]^n,\\
                 &u\leq 0, v\leq 0
  \end{align*}
  and
  \begin{align*}
    \min_{(u,v)} \quad & -\sum_{k=1}^n u_k - \sum_{k=1}^n v_k\\
    \text{s.t.} \quad & u \leq 1-y, u \leq y,\\
                       &v \leq 1-x, v\leq x.
  \end{align*}
  At equilibrium, the unique best response of the second player is
  $u = \min\{1-y,y\}$ and $v = \min\{1-x,x\}$, where the minima
  are taken componentwise. Since $x,y\in[0,1]^n$, these vectors
  are nonnegative. Thus, the first player's constraints
  $u\leq0$ and $v\leq0$ imply $u=v=0$ and force $x$ and $y$
  to be binary.

  Conversely, for every $x\in X\cap\{0,1\}^n$ and every
  $y\in Y(x)$, the tuple $(y,0,0)$ is an equilibrium:
  the first player is feasible and has constant objective,
  and $(0,0)$ is the second player's unique best response.
  Consequently, for every $x\in X$, we have:
  \begin{equation*}
    \GNEP(x) = \left\{\begin{array}{cl}
                        \left\{ (y,0,0)\,:\,  \begin{array}{l}
                                                \sum_{k=1}^n b_ky_k\leq B,\\
                                                y\leq 1-x,\, y\in \{0,1\}^n
                                              \end{array} \right\},\quad &\text{if }x\in \{0,1\}^n,\\
                        \emptyset,&\text{otherwise.}
                      \end{array}\right.
                  \end{equation*}
  Since $0\in Y(x)$ for every $x\in X\cap\{0,1\}^n$, we obtain
  $D=\dom(\GNEP)=X\cap\{0,1\}^n$.
  We can therefore rewrite~\eqref{eq:First-reformualtion} directly as
  \begin{equation*}
    \min_{x\in X\cap D} \max_{(y,u,v)\in \GNEP(x)} \sum_{k=1}^n b_ky_k,
  \end{equation*}
  which has exactly the form of a pessimistic SLMF problem with
  rational linear data and no coupling constraints. Since the
  construction is readily polynomial in the encoding of
  $A,B,a_1,\ldots,a_n,b_1,\ldots,b_n$, the proof is completed.
\end{proof}

\section{Conclusions}
\label{sec:conclusions}

In this paper, we studied the relationship between
SLMF problems and bilevel optimization. We
showed that every optimistic SLMF problem admits an equivalent optimistic bilevel
reformulation. For pessimistic SLMF problems, we established
reformulations to and from optimistic trilevel optimization, with an
attainability assumption required in the forward direction. Under
rational linear data, this assumption always holds and the
reformulations preserve linearity and have polynomial encoding length.
The associated decision problems are NP-complete in the optimistic
case and $\Sigma_2^p$-complete in the pessimistic case, even without
coupling constraints and with only two followers.

These results clarify when methods for bilevel optimization can be
applied to SLMF problems and when an additional optimization level is
needed. A natural next step would be to investigate the computational
performance of the proposed reformulations. It is also of interest to
identify subclasses of pessimistic SLMF problems that admit
reformulations as linear bilevel problems despite the complexity
obstruction for the general class.

\section*{Acknowledgements}

The second author was partially supported by CMM BASAL Grant FB210005
and FONDECYT Grant 1251159 (ANID-Chile).

\section*{Data Availability Statement}

No Data was used during this research.
\section*{Use of AI}

OpenAI's GPT was used to improve writing and presentation of the
manuscript, and it helped for preliminary checking of correctness of
the proofs. All the mathematical content (statements and proofs) are
due to the authors, with the exception of Example 5.2, which was obtained
using GPT.

\printbibliography

\end{document}